\documentclass[11pt]{amsart}

\usepackage[centertags]{amsmath}
\usepackage{amsfonts}
\usepackage{amssymb}
\usepackage{amsthm}
\usepackage[english]{babel}
\usepackage[active]{srcltx}

\usepackage{pst-all}

\usepackage[colorlinks]{hyperref}

\setlength{\textheight}{21.5cm} \setlength{\textwidth}{14cm}
\setlength{\parindent}{16pt} \setlength{\leftmargin}{2cm}
\setlength{\hoffset}{-1.3cm}

\newcommand{\R}{\mathbb{R}}

\newcommand{\N}{\mathbb{N}}

\newcommand{\Sx}{S_{X^{*}}}

%%%%%%%%%%%%%%%
%%%%%%%%%%%%%%%

\newtheorem{thm}{Theorem}[section]
\newtheorem{cor}[thm]{Corollary}
\newtheorem{lem}[thm]{Lemma}
\newtheorem{lema}[thm]{Lemma}
\newtheorem{prop}[thm]{Proposition}

\newtheorem{rem}[thm]{Remark}

\newtheorem{defn}[thm]{Definition}

\numberwithin{equation}{section}

%%%%%%%%%%%%%%%%%%%%%%%%%%%%%%%%%%%%%%%%%%%
%%%%%%%%%%%%%%%%%%%%%%%%%%%%%%%%

%%%%%Para figuras%%%%%%%%%
%\newpsobject{malla}{psgrid}{subgriddiv=1, griddots=10, gridlabels=6pt}

%%%%%%%%%%%%%%%%%%%%%%%%%%%%

\begin{document}

%\newpsobject{malla}{psgrid}{subgriddiv=1, griddots=10, gridlabels=6pt}

\title{A characterization of the Radon-Nikodym property}

\author[Robert Deville]{Robert Deville}
\address{Institut de Math\'ematiques de Bordeaux, Universit\'e Bordeaux 1, 33405, Talence, France}
\email{Robert.Deville@mat.u-bordeaux1.fr}

\author[\'Oscar Madiedo]{\'Oscar Madiedo}
\address{ Departamento de An\'alisis Matem\'atico, Universidad Complutense de
Madrid, 28040, Madrid, Spain}
\email{oscar.reynaldo@mat.ucm.es}

\thanks{Research supported in part by MICINN Project MTM2009-07848 (Spain).
The authors wish to thank the Institut de Math\'ematiques de Bordeaux
where this research has been carried out.
O.Madiedo is also supported by grant BES2010-031192.}

\keywords{Radon-Nikodym property characterization, point-slice game.}
\subjclass[2000]{91A05, 46B20, 46B22;}

\date{Julio,  2012}
\maketitle

\begin{abstract}
It is well known that every bounded below and non increasing sequence 
in the real line converges. We give a version of this result valid in Banach spaces 
with the Radon-Nikodym property, thus extending a former result of A. Proch\'azka.
\end{abstract}

%%%%%%%%%%%%%%%%%%
%%%%%%%%%%%%%%%%%%

\section{ Introduction.}

Our purpose is to state an analogue of the fact that every bounded below and non increasing sequence 
in the real line $\R$ converges in the framework of a  Banach space $X$. This is not clear,
even whenever $X=\R^2$. However, we shall see that it is indeed possible
in Banach spaces with the Radon-Nikodym property.

\begin{defn}
Let X be a Banach space. We say that $X$ has the Radon-Nikodym property
if, for every non empty closed convex bounded subset $C$ of $X$ and every $\eta>0$,
there exists $g$ in the unit sphere of the dual of $X$ and $c\in\R$ such that  
$\{x\in C;\,g(x)<c\}$ is non empty and 
has diameter less than $\eta$. 
\end{defn}

Every reflexive Banach space has the Radon-Nikodym property, but $L^1([0,1])$
and $\mathcal C(K)$ spaces whenever $K$ is an infinite compact space fail this poperty.
Moreover, if $Y$ is a subspace of a Banach space with the Radon-Nikodym property,
then $Y$ has the Radon-Nikodym property. The Radon-Nikodym property
can be characterized in many ways, see \cite{RB}, \cite{DeMa} and \cite{RP}.

Before stating our main result, we need some notations. 
If $X$ is a real Banach space,
$S_X$ stands for its unit sphere and $\Sx$ for the unit sphere of its dual. For $f \in X^*$ 
and $r > 0$ we denote 
$\overline{B} (f, r) = \{g \in X^* : \|f - g\| \leqslant r\}$ 
and $B(f,r) = \{g \in X^* : \|f - g\| < r\}$ the closed and open ball centered at $f$ and of radius $r$ respectively. 
Let us recall that whenever $X$ is a Banach space, $g\in X^*$ and $c\in\R$, 
we denote $\{g\geqslant c\}$
the closed half space $\{u\in X;\,g(u)\geqslant c\}$ and $\{g<c\}$ 
the open half space $\{u\in X;\,g(u)<c\}$.
If $C$ is a non empty convex subset of $X$, the set $C\cap\{g\geqslant c\}$ 
is called a closed slice of $C$
and $D\cap\{g<c\}$ an open slice of $C$.
If $x\in X$ and $f\in X^*$, we shall use both notations $y(x)$ and $\langle f,x\rangle$
for the evaluation of $f$ at~$x$.

\begin{thm}\label{converges}
Let $X$ be a Banach space with the Radon-Nikodym property.
Let $f \in \Sx$ and  $\varepsilon \in(0,1)$ be fixed. There exists a function 
$t: X \to \Sx \cap B(f,\varepsilon)$ such that for all sequence $(x_n)$, 
if the sequence $\bigl(f(x_n) - \varepsilon \|x_n\|\bigr)$ is bounded below and if  
$\langle t(x_n), x_{n+1} - x_n \rangle \leqslant 0$ for all $n \in \N$, then the sequence $(x_n)$ 
converges in $X$.
\end{thm}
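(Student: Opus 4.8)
The plan is to attach to $f$ and $\varepsilon$ a concave auxiliary function that is forced to decrease along every admissible sequence; this keeps the sequence bounded and reduces the convergence of a scalar quantity to the one-dimensional fact recalled in the abstract, and then the Radon--Nikodym property is used to turn that scalar convergence into norm convergence of $(x_n)$. Concretely, fix $\varepsilon'\in(0,\varepsilon/3)$ and set $\psi(x)=f(x)-\varepsilon'\|x\|$. Being the sum of a linear and a concave function, $\psi$ is concave and continuous, and its superdifferential
\[
\partial^{+}\psi(x):=\{h\in X^{*}:\psi(y)\leqslant\psi(x)+\langle h,y-x\rangle\ \text{ for all }y\in X\}
\]
equals $f-\varepsilon'\,\partial\|x\|$, where $\partial\|x\|$ is the subdifferential of the norm; in particular every $h\in\partial^{+}\psi(x)$ has the form $h=f-\varepsilon'g$ with $\|g\|\leqslant1$, so $\|h-f\|\leqslant\varepsilon'$ and $\|h\|\geqslant\|f\|-\varepsilon'=1-\varepsilon'$. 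I would define $t(x):=h/\|h\|$ for a carefully chosen $h\in\partial^{+}\psi(x)$; since $\|t(x)-f\|\leqslant\bigl|\,1-\|h\|\,\bigr|+\|h-f\|\leqslant2\varepsilon'<\varepsilon$, this does land in $\Sx\cap B(f,\varepsilon)$. The real point is \emph{which} supergradient to choose: it must also point along a direction well suited to slicing. Here the Radon--Nikodym property is used. Given a closed bounded convex set $C$ and $\delta>0$, the definition produces an open slice of $C$ of diameter $<\delta$; removing it leaves a smaller closed bounded convex set, and iterating transfinitely exhibits every ball $R\,\overline{B}(0,1)$ as a disjoint union of (iterated) slices, each of diameter $<\delta$, indexed by ordinals. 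One then tries to select, for each $x$, a supergradient $h\in\partial^{+}\psi(x)$ such that, for each of these decompositions, moving from $x$ in any $t(x)$-admissible direction never raises the ordinal index of the slice containing the current point. Making this choice consistent over all scales at once --- bearing in mind that $\partial^{+}\psi(x)$ can be a singleton at points of Gateaux smoothness of the norm, precisely where the room to manoeuvre is tightest --- is the step I expect to be the main obstacle.

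Granting such a $t$, let $(x_n)$ be as in the statement, write $h_n\in\partial^{+}\psi(x_n)$ with $t(x_n)=h_n/\|h_n\|$, and let $-M$ be a lower bound for $\bigl(f(x_n)-\varepsilon\|x_n\|\bigr)$. From $\langle t(x_n),x_{n+1}-x_n\rangle\leqslant0$ we get $\langle h_n,x_{n+1}-x_n\rangle\leqslant0$, whence the supergradient inequality gives $\psi(x_{n+1})\leqslant\psi(x_n)+\langle h_n,x_{n+1}-x_n\rangle\leqslant\psi(x_n)$. On the other hand $\psi(x_n)=f(x_n)-\varepsilon'\|x_n\|\geqslant f(x_n)-\varepsilon\|x_n\|\geqslant-M$, because $\varepsilon'<\varepsilon$ and $\|x_n\|\geqslant0$. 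Hence $\bigl(\psi(x_n)\bigr)$ is non-increasing and bounded below, so it converges to some $\ell\in\R$; in particular $\kappa:=\sup_n|\psi(x_n)|<\infty$. Combining $f(x_n)=\psi(x_n)+\varepsilon'\|x_n\|\leqslant\kappa+\varepsilon'\|x_n\|$ with $f(x_n)\geqslant\varepsilon\|x_n\|-M$ yields $(\varepsilon-\varepsilon')\|x_n\|\leqslant\kappa+M$, so $(x_n)$ is bounded. (The gap $\varepsilon-\varepsilon'>0$ is essential here; note that $t\equiv f$ already yields boundedness but, in general, not convergence, which is why a finer $t$ is needed.)

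Finally, $(x_n)$ lies in some ball $R\,\overline{B}(0,1)$. Fix $k\in\N$: in the construction of $t$ this ball was decomposed into ordinal-indexed slices of diameter $<1/k$, and $t$ was arranged so that no admissible step increases the index of the slice containing $x_n$. Since a non-increasing sequence of ordinals is eventually constant, from some point on all $x_n$ lie in one such slice, of diameter $<1/k$. As $k$ was arbitrary, $(x_n)$ is Cauchy, and $X$ being complete it converges. Thus everything comes down to producing, in the first step, one function $t\colon X\to\Sx\cap B(f,\varepsilon)$ that is simultaneously supergradient-valued for $\psi$ --- which yields the monotonicity and boundedness above --- and compatible, at every scale, with the ordinal slice decompositions furnished by the Radon--Nikodym property --- which yields the convergence. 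Reconciling these two demands is, I expect, the heart of the proof.
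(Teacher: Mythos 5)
Your proposal does not contain a proof: the object on which everything hinges --- a single map $t:X\to \Sx\cap B(f,\varepsilon)$ that is simultaneously a (normalized) supergradient selection of $\psi=f-\varepsilon'\|\cdot\|$ and compatible, at every scale, with ordinal-indexed slice decompositions --- is exactly what you leave unconstructed, and you say so yourself. Worse, the supergradient requirement makes the plan essentially unworkable rather than merely hard: $\partial^{+}\psi(x)=f-\varepsilon'\partial\|\cdot\|(x)$ is a singleton at every point where the norm is G\^ateaux smooth (e.g.\ at every $x\neq 0$ of a Hilbert space, which has the Radon--Nikodym property), so there $t(x)$ is completely determined before any slicing enters, and there is no reason whatsoever for this forced value to lie in the small neighbourhoods of the slicing functionals that the ``never raise the ordinal index'' property demands (in the paper that property is obtained from Lemma \ref{radius} precisely by requiring $t(x)$ to lie within the radius $R(x)$ of the functional defining the slice containing $x$, and the whole of Sections 4--5 is devoted to keeping enough freedom inside those shrinking balls to achieve this at all scales simultaneously, via the nested multi-tactics $T_{k+1}(x)\subset T_k(x)$ with $diam\,T_k(x)\to 0$). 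In addition, your final step quietly assumes that the slicings of the balls $R\,\overline{B}(0,1)$ for different $R$ and different scales $1/k$ can all be respected by one map defined a priori on $X$, while the bound $R$ on the sequence is only known a posteriori; you never address this consistency problem, which is the actual content of the paper's Theorem \ref{sequence}.

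The part you do carry out (monotonicity of $\psi(x_n)$ and boundedness of $(x_n)$) is the easy part, and the paper gets the same control much more cheaply and without tying the tactic's hands: it partitions the domain into the shells $\Lambda_p\setminus\Lambda_{p+1}$, $\Lambda_p=\{f\geqslant\varepsilon\|\cdot\|+p\}$, $p\in\Z$, and builds the tactic (via Lemma \ref{diamC} and the transfinite slicing of Lemma \ref{diamCalpha}) so that an admissible move from $\Lambda_p$ never lands in $\Lambda_{p+1}$; then the hypothesis that $f(x_n)-\varepsilon\|x_n\|$ is bounded below forces the integer index $p_n$ to stabilize, and the Cauchy property comes from the ordinal index within one shell. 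The monotone quantity is thus an integer/ordinal index attached to a slicing produced by the RNP denting machinery (density of strongly exposing functionals), not the value of a concave function, and no supergradient condition is imposed on $t$. If you want to salvage your approach, drop the requirement $t(x)\in\partial^{+}\psi(x)/\|\cdot\|$ (keep only $t(x)$ close to $f$, which already gives the cone confinement through the $\Lambda_p$'s) and face the real difficulty: refining the slicings scale by scale inside each slice and showing the admissible functionals at step $k+1$ can be chosen inside those of step $k$ --- that is the heart of the paper's proof.
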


\begin{rem} \rm
Theorem \ref{converges} can be reformulated in terms of games. 
This presentation was introduced in \cite{MZ}, see also \cite{DeMa} and \cite{Z}. There are two players
$A$ and $B$ who play alternatively. 
Player $A$ chooses linear functionals $f_n\in\Sx\cap B(f,\varepsilon)$ 
and player $B$ chooses  
$x_n$ in the cone $\{x\in X; f(x)-\varepsilon\|x\|+m\geqslant 0\}$ for some $m\in\R$,
with the following rules.
\begin{itemize}
\item[-] player $B$ chooses a point $x_0$;
\item[-] once  $B$ has played $x_n$, $A$ chooses $f_n\in\Sx\cap B(f,\varepsilon)$;
\item[-] once $A$ has played $f_n$, $B$ chooses $x_{n+1}$ such that $f_n(x_{n+1}-x_n)\leqslant 0$.
\end{itemize}
Player $A$ wins if the sequence $(x_n)$ converges. A winning tactic for player $A$
is a function $t: X \to \Sx \cap B(f,\varepsilon)$ such that, if for each $n$,
$f_n=t(x_n)$, then $A$ wins the game. Theorem \ref{converges} expresses the fact that in spaces with the Radon-Nikodym property,
player $A$ has always a winning tactic.
\end{rem}

Let us give a particular case of Theorem \ref{converges}. We assume here that $X=\R^2$, which has the Radon-Nikodym property.
It is clear that if $(x_n,y_n)$ is a sequence in $\R^2$ such that $(y_n)$ is non increasing and bounded below,
then the sequence $(y_n)$ converges, but in general the sequence $(x_n,y_n)$ does not converge, even if we require that
the sequence $(x_n,y_n)$ is included in a cone $C=\{(x,y); y-\varepsilon|x|+m\geqslant 0\}$ 
for some $\varepsilon>0$ and $m\in\R$. 
An obvious consequence of our Theorem is~:

\begin{cor}
Given $0<\varepsilon<1/2$, there exists a function $\tau:\R^2\to]-\varepsilon,\varepsilon[$ such that for every
sequence $(x_n,y_n)\in\R^2$, if
the sequence $(y_n-\varepsilon|x_n|)$ is bounded below and if  
$y_{n+1}-y_n\leqslant \tau(x_n,y_n)(x_{n+1}-x_n)$ for all $n\in\N$,
then the sequence $(x_n,y_n)$ converges. 
\end{cor}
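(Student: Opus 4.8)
The plan is to derive the Corollary from Theorem \ref{converges} by putting on $\R^2$ the norm $\|(x,y)\|_1 = |x|+|y|$; then $X=(\R^2,\|\cdot\|_1)$ has the Radon--Nikodym property (it is finite dimensional, hence reflexive) and its dual is $(\R^2,\|\cdot\|_\infty)$. I would fix $f=(0,1)\in\Sx$, so that $f(x,y)=y$, and apply Theorem \ref{converges} not to $\varepsilon$ itself but to the smaller parameter $\varepsilon':=\dfrac{\varepsilon}{1+\varepsilon}\in(0,1)$. This produces a map $t:\R^2\to\Sx\cap B(f,\varepsilon')$, and the whole point of this particular choice of norm is that it pins $t$ down very cleanly: writing $t(x,y)=(a,b)$, the conditions $\max(|a|,|b|)=1$ and $\max(|a|,|b-1|)<\varepsilon'<1$ force $|a|<\varepsilon'$ and $b=1$. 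I would then define $\tau(x,y):=-a$, where $(a,1)=t(x,y)$; since $|\tau(x,y)|=|a|<\varepsilon'<\varepsilon$, this gives a function $\tau:\R^2\to\,]-\varepsilon,\varepsilon[$.

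With this definition the two hypotheses of the Corollary translate, for a sequence $P_n:=(x_n,y_n)$, into the two hypotheses of Theorem \ref{converges}. Indeed, writing $t(P_n)=(a_n,1)$, the inequality $y_{n+1}-y_n\leqslant\tau(x_n,y_n)(x_{n+1}-x_n)$ rearranges to $a_n(x_{n+1}-x_n)+(y_{n+1}-y_n)\leqslant 0$, which is exactly $\langle t(P_n),P_{n+1}-P_n\rangle\leqslant 0$. It then remains to check that $\bigl(f(P_n)-\varepsilon'\|P_n\|_1\bigr)=\bigl(y_n-\varepsilon'(|x_n|+|y_n|)\bigr)$ is bounded below whenever $(y_n-\varepsilon|x_n|)$ is. Once both are verified, Theorem \ref{converges} gives that $(P_n)$ converges in $(\R^2,\|\cdot\|_1)$, hence in $\R^2$ since all norms on $\R^2$ are equivalent.

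For the remaining verification I would argue as follows. Suppose $y_n-\varepsilon|x_n|\geqslant -M$ for all $n$; this already gives $y_n\geqslant -M$ and $|x_n|\leqslant (y_n+M)/\varepsilon$ for every $n$. If $y_n<0$ then $-M\leqslant y_n<0$ and $|x_n|<M/\varepsilon$, so $\|P_n\|_1$ is bounded and $y_n-\varepsilon'\|P_n\|_1$ is bounded below. If $y_n\geqslant 0$ then
\[
y_n-\varepsilon'\|P_n\|_1 \;=\; (1-\varepsilon')\,y_n-\varepsilon'|x_n| \;\geqslant\; \Bigl(1-\varepsilon'-\tfrac{\varepsilon'}{\varepsilon}\Bigr)y_n-\tfrac{\varepsilon'}{\varepsilon}M,
\]
and the choice $\varepsilon'=\varepsilon/(1+\varepsilon)$ is exactly what makes the coefficient $1-\varepsilon'-\varepsilon'/\varepsilon$ equal to $0$ (more generally nonnegative for any smaller $\varepsilon'$), so the expression is bounded below by $-\varepsilon'M/\varepsilon$.

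I expect the only genuine (and mild) obstacle to be precisely this last transfer of the ``bounded below'' condition: one cannot simply compare $\|P_n\|_1$ with $|x_n|$, because when $y_n$ is large and positive the $\ell^1$-norm carries an extra term $\varepsilon'|y_n|$; one must use that in that regime the hypothesis $y_n-\varepsilon|x_n|\geqslant -M$ itself bounds $|x_n|$ in terms of $y_n$, and then shrink $\varepsilon$ to $\varepsilon'$ by just enough to absorb that extra term. Everything else is routine bookkeeping, and the hypothesis $\varepsilon<1/2$ is more than sufficient.
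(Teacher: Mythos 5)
Your proposal is correct and follows essentially the same route as the paper: the $\ell^1$ norm on $\R^2$ with dual $\ell^\infty$, the functional $f=(0,1)$, the shrunken parameter $\varepsilon'=\varepsilon/(1+\varepsilon)$, and the identification $t(x,y)=(-\tau(x,y),1)$. The only difference is that you spell out the verification that $(y_n-\varepsilon'(|x_n|+|y_n|))$ is bounded below when $(y_n-\varepsilon|x_n|)$ is, a step the paper merely asserts, and your case analysis there is sound.
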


\begin{proof}
Assume that $X=\R^2$ is endowed with the norm $\|(x,y)\|_1=|x|+|y|$ and that $0<\varepsilon<1$. Fix $f\in X^*$ with coordinates $(0,1)$.
Observe first that  if $X_n\in\R^2$ has coordinates $(x_n,y_n)$ and if the sequence $(y_n-\varepsilon|x_n|)$ is bounded below,
then the sequence $\bigl(f(X_n)-\frac{\varepsilon}{1+\varepsilon}\|X_n\|_1\bigr)$ is bounded below.
Applying Theorem \ref{converges}, there exists $t:X\to \Sx\cap B(f,\frac{\varepsilon}{1+\varepsilon})$
such that if the sequence $\bigl(f(X_n)-\frac{\varepsilon}{1+\varepsilon}\|X_n\|\bigr)$ is bounded below and 
$\langle t(X_n),X_{n+1}-X_n\rangle\leqslant 0$ for all $n\in\N$,
then the sequence $(X_n)$ converges in $\R^2$.
On the other hand, $X^*$ is $\R^2$ endowed with the supremum norm. Since
$t(x,y)\in \Sx\cap B(f,\varepsilon)$, we have that the coordinates of $t(x,y)$ are  of the form $(-\tau(x,y),1)$, with $-\varepsilon<\tau(x,y)<\varepsilon$.
Finally,
the condition $\langle t(X_n),X_{n+1}-X_n\rangle\leqslant 0$
is equivalent to $y_{n+1}-y_n\leqslant \tau(X_n)(x_{n+1}-x_n)$.
\end{proof}

\begin{rem} \rm The above result is an improvement of the following result of A. Proch\'azka, see \cite[Therorem 2.3]{AP}.
\\
\sl Let $X$ be a Banach space with the Radon-Nikodym property and $K$ be a closed convex bounded subset of $X$.
There exists a function $t: K \to \Sx $ such that for all sequence $(x_n)$ in $K$, 
if  $\langle t(x_n), x_{n+1} -x_n \rangle \leqslant 0$ for all $n \in \N$, then the sequence $(x_n)$ 
converges in~$X$.
\rm
\\
 Theorem \ref{converges} extends the above result in three manners.
\begin{itemize}
\item[-]  The tactic $t$ is defined on all the space $X$.
\item[-]  The hypothesis that the sequence $(x_n)$ is bounded $(x_n\in K$)
is replaced by the weaker hypothesis the sequence
$\bigl(f(x_n) - \varepsilon \|x_n\|\bigr) $ is bounded below, which means that the sequence $(x_n)$ lies in a cone $\{x;\,f(x)-\varepsilon\|x\|+m\geqslant 0\}$ for some $m\in\R$.
\item[-]  The tactic $t$ in our theorem takes its values only in a subset of $\Sx$ of small diameter.
\end{itemize}
\end{rem}

\begin{rem}\rm
Let us notice that Theorem \ref{converges} is actually a characterization of the Radon-Nikodym property.
Indeed, if $X$ fails the Radon-Nikodym property, there exists a non empty convex bounded subset $C$ of $X$ and $\eta>0$,
such that for all $f\in\Sx$ and $c\in\R$, if the slice $C\cap\{f<c\}$ is non empty, then it  
has diameter greater than $2\eta$. Moreover, we can assume that $C$ is open. 
Indeed, if $\delta<\eta$, the set $C+B(0,\delta)$ is open and all its slices have 
diameter greater than $2(\eta-\delta)$.
Now let  $(f_n)$ be a sequence in $\Sx$. We construct inductively a sequence $(x_n)$ in $C$ as follows. 
We choose arbitrarily $x_0\in C$. Once
$x_n$ has been constructed, we note that the slice $C\cap\{f_n< f_n(x_n)\}$ is non empty
because $x_n\in C$ and $C$ is open, so this slice has diameter 
greater than $2\eta$,
hence we can choose $x_{n+1}$ in $C$ such that $f_n(x_{n+1}-x_n)< 0$ and 
$\|x_{n+1}-x_n\|\geqslant\eta$.
Moreover, since $\{f(x)-\varepsilon\|x\|;\, x\in C\}$ is bounded below, we have in particular that $\{f(x_n)-\varepsilon\|x_n\|;\, n\in\N\}$ is bounded below.
This clearly contradicts the existence of a function $t$ with the property of Theorem \ref{converges}.
\end{rem}

\begin{rem}\rm
Let us notice particular cases of Theorem \ref{converges} have been obtained in \cite{MZ} and  \cite{DeMa},
and used there to give a simple proof of Buchzolich's solution of the Weil gradient problem, and also used
in \cite{DJ} to construct almost classical solutions of Hamilton-Jacobi equations. 
\end{rem}

Our paper is organized as follows. The following section is devoted to the proof of two 
elementary geometrical lemmas.
In section 3, we define a mapping $t$ on a given subset of $X$
such that for every sequence $(x_n)$ in this subset satisfying the assumptions of Theorem \ref{converges}, the sequence 
$(x_n)$ is $\eta$-Cauchy for some $\eta>0$. Such a mapping will be called $\eta$-tactic.
In the following section we prove that every mapping which is near (in some sense) the function $t$
is also an $\eta$-tactic. We are thus led to the definition of multi-$\eta$-tactic.
We then construct, for a given sequence $(\eta_k)$ tending to $0$, a decreasing sequence of multi-$\eta_k$-tactics,
and we prove finally in the last section Theorem \ref{converges}.

\section{Slices.}

The following lemma expresses the fact that if $D$ is a  closed convex set of $X$,
possibly unbounded, and if $S$ is a bounded slice defined by $\widehat{f} \in S_{X^*}$,
then functionals which are in a neighborhood of  $\widehat{f}$ define slices 
of $D$ included in $S$.

\begin{lema}\label{radius}
Let $D$ be a closed convex set of $X$, $\widehat{f} \in S_{X^*}$  and $c \in \R$. 
Assume that  $S = D \cap \{\widehat{f} < c\}$ 
is bounded and that both $S$ and $D\backslash S$ are non empty.
Let us denote $M:=\max\{\|u\|; u \in S\}$ and $R(x)= \frac{c-\widehat{f}(x)}{4M}$. 
If $x \in S$ and $g \in S_{X^*}$ satisfy $\|g-\widehat{f}\| \leqslant R(x)$, 
then $\bigl(D\backslash S\bigr)\cap\{g\leqslant g(x)\}=\emptyset$.
\end{lema}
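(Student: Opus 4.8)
The plan is to fix an arbitrary point $y \in D\backslash S$ and show that $g(y) > g(x)$; since $y$ is arbitrary, this is precisely the assertion $\bigl(D\backslash S\bigr)\cap\{g\leqslant g(x)\}=\emptyset$. The naive move — expanding $g(y)-g(x)=\bigl(\widehat f(y)-\widehat f(x)\bigr)+(g-\widehat f)(y-x)$ and bounding the second term by $\|g-\widehat f\|\,(\|y\|+\|x\|)$ — is doomed, because $D$ may be unbounded, so $\|y\|$ is not controlled by $M$. The point is instead to transfer the estimate to the place where the segment $[x,y]$ crosses the bounding hyperplane $\{\widehat f=c\}$, which is a limit of points of $S$ and therefore has norm at most $M$.

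First I would set $w(t)=x+t(y-x)$ for $t\in[0,1]$, so that $w(t)\in D$ by convexity of $D$. Since $x\in S$ we have $\widehat f(x)<c$, and since $y\in D\backslash S$ we have $\widehat f(y)\geqslant c$; hence $t\mapsto\widehat f(w(t))$ is affine and strictly increasing, and
$$t^{*}:=\frac{c-\widehat f(x)}{\widehat f(y)-\widehat f(x)}\in(0,1],\qquad \widehat f\bigl(w(t^{*})\bigr)=c.$$
For each $t<t^{*}$ one has $\widehat f(w(t))<c$, so $w(t)\in S$ and $\|w(t)\|\leqslant M$; letting $t\uparrow t^{*}$ and using continuity of the norm gives $\|w(t^{*})\|\leqslant M$.

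Next I would estimate $g\bigl(w(t^{*})\bigr)-g(x)$. Writing $g=\widehat f+(g-\widehat f)$, using $\widehat f(w(t^{*}))=c$, the bounds $\|w(t^{*})\|\leqslant M$ and $\|x\|\leqslant M$ (the latter since $x\in S$), and the hypothesis $\|g-\widehat f\|\leqslant R(x)=\dfrac{c-\widehat f(x)}{4M}$, one gets
$$g\bigl(w(t^{*})\bigr)-g(x)=\bigl(c-\widehat f(x)\bigr)+(g-\widehat f)\bigl(w(t^{*})-x\bigr)\geqslant\bigl(c-\widehat f(x)\bigr)-\|g-\widehat f\|\bigl(\|w(t^{*})\|+\|x\|\bigr)\geqslant\frac{c-\widehat f(x)}{2}>0,$$
the strict positivity coming from $\widehat f(x)<c$ (and $M>0$, which is implicit in the statement since $R(x)$ must be defined). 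Finally, since $w(t^{*})-x=t^{*}(y-x)$ with $t^{*}>0$, this yields $g(y)-g(x)=\frac{1}{t^{*}}\bigl(g(w(t^{*}))-g(x)\bigr)>0$, i.e.\ $g(y)>g(x)$, which completes the argument.

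I do not expect a genuine obstacle here. The only step deserving a word of care is the passage $t\uparrow t^{*}$ used to bound $\|w(t^{*})\|$ by $M$ — relevant precisely when $\widehat f(y)=c$ and $w(t^{*})=y$ — but this is just boundedness of $S$ together with continuity of the norm; the hypothesis that $D\backslash S$ is non empty serves only to keep the statement non-vacuous.
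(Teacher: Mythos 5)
Your proof is correct and follows essentially the same route as the paper's: both arguments hinge on the point where the segment from $x$ to a point of $D\backslash S$ meets the hyperplane $\{\widehat f=c\}$, bound its norm by $M$ via the boundedness of $S$, and use $2MR(x)=\tfrac{c-\widehat f(x)}{2}$ to get the decisive inequality. The only difference is presentational — you argue directly that $g(y)>g(x)$ and scale back by $1/t^{*}$, whereas the paper assumes $g(z)\leqslant g(x)$ and derives the contradiction $\widehat f(y)<c$ at the crossing point.
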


\begin{proof}
It is clear that $0<M<+\infty$, so, for $x\in S$, $R(x)$ is well defined and $R(x)>0$. 
Let us assume  that $\bigl(D\backslash S\bigr)\cap\{g\leqslant g(x)\}\ne\emptyset$ 
and fix $z\in D\backslash S$ such that $g(z) \leqslant g(x)$.
There exists a unique $q\in[0,1]$ such that, if 
$y=qx+(1-q)z$, then $\widehat{f}(y)=c$.
Thus $y$ is in the closure of $S$ and $\| y\|\leqslant M$. 
On the other hand, by linearity of $g$, $g(z)\leqslant g(y)\leqslant g(x)$.
By hypothesis,  $g(x) \leqslant \widehat{f}(x) + R(x)\|x\|\leqslant \widehat{f}(x) + MR(x)$. 
Hence 
$$
\widehat{f}(y) \leqslant  g(y) +\|g-\widehat{f}\|\|y\| \leqslant g(x) + R(x)M \leqslant \widehat{f}(x) + 2MR(x)
= \frac{\widehat{f}(x) + c}{2} < c
$$
Thus $\widehat{f}(y) < c$.
This contradiction concludes the proof.

\end{proof}

%%%%%%%%%%%%%%%%%%%%%%%%%%%%%%%%%%%%%%
%%%%%%%%%%%%%%%%%GRAFICA%%%%%%%%%%%%%%%
%%%%%%%%%%%%%%%%%%%%%%%%%%%%%%%%%%%%

\begin{center} 
\begin{picture}(140,100)(-70,-20) 
%%%%%%%%%%%%
%PARABOLA%%%%%
%%%%%%%%%%%%
 \bezier{400}(-100,50)(-20,-40)(60,50)

\put(-90,20){\line(5,1){150}}

 \put(-40,42){$\{\widehat{f} = c\}$}

\put(-30,5){\line(5,2){80}}

% \put(53,35){$\{g = d\}$}
 \put(6,13){$C$}
 \put(-92,45){$D$}
 \put(-30,18){$S$}
 
 \put(53,35){$\{g = g(x)\}$}
 \put(-9,12){$+$}
 \put(-5,18){$x$}

 \put(-35,-10){{\bf Figure 1}} 

\end{picture}
\end{center}

%%%%%%%%%%%%%%%%%%%%%%%%%%%%%%%%%%%%%
%%%%%%%%%%%%%%%%%%%%%%%%%%%%%%%%%%%%%%
%%%%%%%%%%%%%%%%%%%%%%%%%%%%%%%%%%%%

If $D$ is a closed convex set of a Banach space $X$ and if $g\in X^*$, 
we say that $g$ strongly exposes $D$ if $diam(D\cap\{g<c\})$ tends to $0$ as $c$ tends to 
$\inf\{g(u);\,u\in D\}$. 
The following lemma expresses the fact that if $D$ is a  closed convex set of 
a Banach space with the Radon-Nikodym property,
and if $S$ is a bounded slice defined by $\widehat{f} \in S_{X^*}$,
then there exists functionals in a neighborhood of  $\widehat{f}$ 
that define small slices of $D$ included in $S$.

\begin{lem}\label{diam} Assume that $X$ has the Radon-Nikodym property.
Let $\eta,r>0$ and $D$ be a closed convex set of $X$. Let $\widehat{f}\in X^*$ and $c\in\R$ be such that $S=D \cap \{ \widehat{f} < c\}$ is a non empty bounded set. 
Then, there exists $g \in \Sx$  and $d \in \R$ such that, 
if $C = D\cap\{g< d\}$, then 
\begin{itemize}
\item[(i)] $C \neq \emptyset$, $diam \;C < \eta$ and $C\subset S$,
\item[(ii)] $\big\|g - \widehat{f}\big\| <\min\big\{r, \inf\{R(u); u \in C\}\big\}$.
\end{itemize}
\end{lem}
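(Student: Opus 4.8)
The plan is to combine the Radon-Nikodym property of $X$ with a rescaling-and-translation trick so that the existing Lemma \ref{radius} can be applied. First I would reduce to the case $\widehat{f}\in S_{X^*}$: since $S=D\cap\{\widehat{f}<c\}$ is a nonempty bounded slice, $\widehat{f}$ is not identically zero on $D$ (otherwise the slice is either empty or all of $D$, hence unbounded unless $D$ is already bounded, a case one can treat separately or absorb). I may also assume $D\setminus S\neq\emptyset$: if $D\setminus S=\emptyset$ then $D=S$ is itself a nonempty closed convex bounded set, and we directly apply the Radon-Nikodym property to $D$ to get a small slice, checking (ii) by shrinking the defining functional towards $\widehat f$ — but actually the cleaner route is to note $C\subset S=D$ automatically and only (ii) needs care, which is handled exactly as below.

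The heart of the argument: apply the Radon-Nikodym property to the nonempty closed convex bounded set $\overline{S}$ (the closure of $S$ in $X$) with the parameter $\eta$. This yields $h\in S_{X^*}$ and $e\in\R$ such that $\overline{S}\cap\{h<e\}$ is nonempty and has diameter less than $\eta$. This is a small slice, but it is a slice of $\overline S$, not of $D$, and its defining functional $h$ need not be close to $\widehat f$. To fix both defects at once I would consider, for small $\lambda>0$, the functional $g_\lambda=(1-\lambda)\widehat{f}+\lambda h$ suitably renormalized to lie in $S_{X^*}$, and an appropriate level $d_\lambda$. For $\lambda$ small, $\|g_\lambda-\widehat f\|$ is as small as we wish, so (ii) can be met once we also know the slice shrinks. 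The key geometric point is that cutting $D$ by $g_\lambda$ at a level slightly below $c$ forces the slice to sit inside $S$ (because on $D\setminus S$ we have $\widehat f\ge c$, and the $h$-perturbation is controlled on the bounded set $S$ while points far out in $D$ are excluded by the dominant $\widehat f$ term): here I would invoke Lemma \ref{radius}, applied with $g=g_\lambda$, to guarantee $\bigl(D\setminus S\bigr)\cap\{g_\lambda\le g_\lambda(x)\}=\emptyset$ for suitable $x$, hence $C=D\cap\{g_\lambda<d_\lambda\}\subset S$. Once $C\subset S$, we have $C\subset\overline S$, and by choosing $d_\lambda$ so that $C$ is contained in the $h$-sublevel set defining the small slice of $\overline S$, we get $\operatorname{diam}C<\eta$ and $C\neq\emptyset$. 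Finally, since $C\subset S$ is bounded with $\sup\{\|u\|:u\in C\}\le M$ and the $g_\lambda$-values stay bounded away from $\inf_C\widehat f$, the quantity $\inf\{R(u):u\in C\}$ is bounded below by a positive constant independent of $\lambda$, so shrinking $\lambda$ further makes $\|g_\lambda-\widehat f\|$ smaller than both $r$ and that infimum, giving (ii).

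The main obstacle I anticipate is the simultaneous bookkeeping of the level $d_\lambda$: it must be low enough that the slice $C$ lands inside the prescribed $h$-small-slice of $\overline S$ (to get $\operatorname{diam}C<\eta$), yet high enough (and $C$ nonempty) that the point $x\in S$ used to apply Lemma \ref{radius} satisfies $\|g_\lambda-\widehat f\|\le R(x)$, which constrains how close to $c$ the relevant $\widehat f$-values are. Threading these inequalities — essentially choosing $d_\lambda$ in a narrow window and then $\lambda$ small relative to the resulting margins — is the delicate part; everything else is the routine convexity and triangle-inequality estimates already modeled in the proof of Lemma \ref{radius}.
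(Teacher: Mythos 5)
There is a genuine gap, and it sits exactly at the step you yourself flag as ``delicate'': the claim that, after applying the Radon--Nikodym property once to $\overline{S}$ to get a fixed small slice $\overline{S}\cap\{h<e\}$, you can choose a level $d_\lambda$ so that the nonempty slice $C=D\cap\{g_\lambda<d_\lambda\}$, with $g_\lambda=(1-\lambda)\widehat{f}+\lambda h$ and $\lambda$ small, lands inside $\{h<e\}$. For small $\lambda$ the functional $g_\lambda$ is dominated by $\widehat{f}$, so its low sublevel sets on $D$ concentrate near the part of $\overline{S}$ where $\widehat{f}$ is close to its infimum, which in general has nothing to do with the region where $h$ is small. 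Concretely, take $X=\R^2$, $D=[0,1]^2$, $\widehat{f}(x,y)=y$, $c=1/2$, so $\overline{S}=[0,1]\times[0,1/2]$; the Radon--Nikodym property may hand you the small slice of $\overline{S}$ cut by $h(x,y)=x-y$ near the corner $(0,1/2)$. Then $g_\lambda=(1-2\lambda)y+\lambda x$, and $g_\lambda(0,t)$ is increasing in $t$ for $\lambda<1/2$, so any slice $D\cap\{g_\lambda<d_\lambda\}$ that meets a neighbourhood of $(0,1/2)$ contains the whole segment $\{0\}\times[0,1/2]$; hence it can neither have diameter $<\eta$ (for small $\eta$) nor be contained in the prescribed $h$-slice. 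To force containment you would need $\lambda$ close to $1$, destroying (ii). So the convex-combination perturbation of a single, pre-selected small slice cannot prove the lemma; the difficulty is not bookkeeping of $d_\lambda$ but the choice of which small slice to aim for, which must be allowed to depend on the perturbed functional itself.

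The paper's proof resolves this with a strictly stronger tool than the small-slice definition of the Radon--Nikodym property: the density in $X^*$ of functionals that strongly expose the closed bounded convex set $\overline{S}=D\cap\{\widehat{f}\leqslant c\}$ (cited from \cite{RB}). One picks $g_\tau$ with $\|g_\tau-\widehat{f}\|\leqslant\tau$ which itself strongly exposes $\overline{S}$ at some $x_\tau$; the small slices are then the slices $D\cap\{g_\tau<g_\tau(x_\tau)+\delta\}$ generated by $g_\tau$ around its own exposed point, so there is no need to steer them into a slice defined by an unrelated functional. The remaining work (the ``Fact'' that $R(x_\tau)\to\sup\{R(u);u\in\overline{S}\}>0$ as $\tau\to 0$) guarantees that $x_\tau$ lies in the open slice $S$, which yields $C\subset S$ and, after choosing $\tau\leqslant\min\{r,\sup R/2\}$ and $\delta$ small, the quantitative bound (ii). Your outline reproduces the easy parts (use of Lemma \ref{radius} for $C\subset S$, lower bound on $\inf\{R(u);u\in C\}$) but replaces this essential density-of-strongly-exposing-functionals input by a one-step perturbation argument that fails in general.
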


\begin{proof}
We first claim that if $\tau\!>\!0$,
there exists $g_\tau\!\in\! X^*$ such that $\|g_\tau\!-\!\widehat{f}\|\!<\!\tau$ and 
$g_\tau$ strongly exposes $D$ at some point 
$x_\tau\in D\cap\{\widehat{f} < c\}$. 
Indeed, the set $\overline{S}=D\cap\{\widehat{f} \leqslant c\}$ is a nonempty closed convex bounded subset of $X$.
Thus, the set $\{g\in X^*;\,g\text{ strongly exposes }\overline{S}\}$ is dense in $X^*$(see \cite{RB}).
For each $\tau>0$, we select $g_\tau\in X^*$ and $x_\tau\in\overline{S}$
such that $\| \widehat{f}-g_\tau\|\leqslant \tau$ and $g_\tau$ strongly exposes 
$\overline{S}$ at $x_\tau$.
We shall now use the following~:

\smallskip\sl
Fact : $R(x_\tau)$ converges to 
$\sup\{R(u);\,u\!\in\! \overline{S}\}=\sup\{R(u);\,u\!\in\! D\}>0$ as $\tau$ goes to $0$.

\smallskip\noindent\rm
Since $R(x)=\gamma\bigl(c-\widehat{f}(x)\bigr)$ where $\gamma$ is a positive constant, it is enough to prove that 
$\widehat{f}(x_\tau)$ converges to $\inf\{\widehat{f}(x);\,x\in \overline{S}\}$. 
If we denote $A=\sup\{\|x\|;\,x\in \overline{S}\}$, we have
$$
\widehat{f}(x_\tau)\leqslant g_\tau(x_\tau)+A\|g_\tau-\widehat{f}\|\leqslant \tau A+g_\tau(x)
$$
for all $x\in \overline{S}$. Thus
$$
\widehat{f}(x_\tau)\leqslant \tau A+\widehat{f}(x)+\|\widehat{f}-g_\tau\|\cdot\|x\|\leqslant 2\tau A + \widehat{f}(x)
$$
Taking the infimum over all $x\in \overline{S}$, we obtain
$$
\inf\{\widehat{f}(x);\,x\in \overline{S}\}\leqslant \widehat{f}(x_\tau)\leqslant 2\tau A+\inf\{\widehat{f}(x);\,x\in \overline{S}\}
$$
and this proves the fact. Since $\sup\{R(u);\,u\!\in\! D\}>0$, if $\tau$ is small enough, we have  $R(x_\tau)>0$, thus
$g_\tau$ strongly exposes $\overline{S}$ at some point 
$x_\tau\in D\cap\{ \widehat{f}<c\}$, hence $g_\tau$ strongly exposes $D$ at some point 
$x_\tau\in D\cap\{\widehat{f}<c\}$, and this proves the claim.

We now prove the lemma. We fix $\tau$ such that $\tau\leqslant\min\{r,\sup\{R(u);\,u\in D\}/2\}$
and such that $R(x_\tau)>\sup\{R(u);\,u\in D\}/2$. 
Let us denote $C_\delta=D\cap\{g_\tau<g_\tau(x_\tau)+\delta\}$.
Using the continuity of $R$ and the 
fact that $g_\tau$ strongly exposes $D$ at $x_\tau$, we have that 
$\inf\{R(u);\,u\in C_\delta\}$ tends to $R(x_\tau)$. We now fix $\delta>0$ small enough
so that $\inf\{R(u);\,u\in C_\delta\}>\sup\{R(u);\,u\in D\}/2$
and $diam(C_\delta)<\eta$.
We now put $g=g_\tau$ and $d=g_\tau(x_\tau)+\delta$.
The set $C=C_\delta= D\cap\{g <  d\}$ is non empty and $diam(C)<\eta$.
Since $\inf\{R(u);\,u\in C\}>0$, we have that $C\subset S$.
Finally, $\| g-\widehat{f}\|<\tau\leqslant\min\{r,\sup\{R(u);\,u\in D\}/2\}\leqslant\min\{r,\inf\{R(u);\,u\in C\}\}$.
\end{proof}

\section{$\varepsilon$-tactics.}

We fix a Banach space $X$ with the Radon-Nikodym property, $f\in\Sx$ and $0<\varepsilon<1$.
For $p\in\mathbb{Z}$, we define $\Lambda_p = \{x; f(x) \geqslant \varepsilon \|x\|+p\}$.
For all $p$, $\Lambda_p$ is a closed convex unbounded subset of $X$,
$\Lambda_q\subset\Lambda_p$ whenever $p\leqslant q$,
$\Lambda_0$ is a  cone of $X$, and if $p\geqslant 0$, for all $x\in\Lambda_p$ and all $\tau\geqslant 1$, 
$\tau x\in\Lambda_p$.
The following result says that if $D$ is a convex set containing $\Lambda_{p+1}$, different from $\Lambda_{p+1}$,
and included in $\Lambda_p$,  then there exists a small slice of $D$ that does not intersect $\Lambda_{p+1}$.

\begin{lem}\label{diamC}
Let $\eta\!>\!0$, $p\in\mathbb{Z}$ and $D$ be a closed convex set of $X$ such that $\Lambda_{p+1}\subset D\subset \Lambda_p$ 
and $D\ne\Lambda_{p+1}$.
Then, there exists  $g\in X^*$, $\|g-f\|<\varepsilon$ and $d\in\R$ 
such that
$$
C=D\cap \{g <  d\}\ne\emptyset,
\qquad
C\cap\Lambda_{p+1}=\emptyset \quad\text{and}\quad diam\:( C )< \eta.
$$
\end{lem}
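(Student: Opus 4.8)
The plan is to deduce this from Lemma \ref{diam}: I will produce a bounded, nonempty (open) slice $S$ of $D$ that already avoids $\Lambda_{p+1}$, and then let Lemma \ref{diam} shrink it to the desired $C$.

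Two elementary facts make this work. First, \emph{every} $h\in X^*$ with $\|h-f\|<\varepsilon$ satisfies $h\geqslant p+1$ on $\Lambda_{p+1}$: for $x\in\Lambda_{p+1}$,
\[
h(x)\geqslant f(x)-\|h-f\|\,\|x\|\geqslant \varepsilon\|x\|+p+1-\|h-f\|\,\|x\|=(\varepsilon-\|h-f\|)\|x\|+p+1\geqslant p+1 .
\]
Second, for such an $h$ and any $c$, the slice $D\cap\{h<c\}$ is bounded, since any $x$ in it lies in $\Lambda_p$, whence $\varepsilon\|x\|+p\leqslant f(x)\leqslant h(x)+\|h-f\|\,\|x\|<c+\|h-f\|\,\|x\|$, i.e. $\|x\|<(c-p)/(\varepsilon-\|h-f\|)$. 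So it suffices to find $\widehat f\in X^*$ with $\|\widehat f-f\|<\varepsilon$ together with a point of $D$ on which $\widehat f<p+1$; then $S:=D\cap\{\widehat f<p+1\}$ is nonempty, bounded, and disjoint from $\Lambda_{p+1}$.

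To produce $\widehat f$, I would choose $x_0\in D\setminus\Lambda_{p+1}$ (possible since $\Lambda_{p+1}\subset D$ and $D\ne\Lambda_{p+1}$); then $f(x_0)-\varepsilon\|x_0\|<p+1$. Pick $\psi\in\Sx$ with $\psi(x_0)=\|x_0\|$. If $f(x_0)<p+1$, set $\alpha=0$; otherwise $\|x_0\|>0$ and $\bigl(f(x_0)-(p+1)\bigr)/\|x_0\|<\varepsilon$, so choose $\alpha$ in the nonempty interval $\bigl(\,(f(x_0)-(p+1))/\|x_0\|,\ \varepsilon\,\bigr)$. Put $\widehat f=f-\alpha\psi$, so that $\|\widehat f-f\|\leqslant\alpha<\varepsilon$ and $\widehat f(x_0)=f(x_0)-\alpha\|x_0\|<p+1$. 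Then apply Lemma \ref{diam} to $D$, $\widehat f$, $c=p+1$, the given $\eta$, and $r=\varepsilon-\alpha$: it provides $g\in\Sx$ and $d\in\R$ with $C=D\cap\{g<d\}$ nonempty, $diam\,C<\eta$, $C\subset S$, and $\|g-\widehat f\|<\varepsilon-\alpha$. Finally $C\cap\Lambda_{p+1}\subset S\cap\Lambda_{p+1}=\emptyset$, and $\|g-f\|\leqslant\|g-\widehat f\|+\|\widehat f-f\|<(\varepsilon-\alpha)+\alpha=\varepsilon$, which is exactly the conclusion.

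The only genuinely delicate point is the combination of the first observation with the tilt $f\rightsquigarrow\widehat f$: one must move $f$ by strictly less than $\varepsilon$ yet by enough to drop $x_0$ below the level $p+1$, and this is possible precisely because $x_0\notin\Lambda_{p+1}$ forces $f(x_0)-(p+1)<\varepsilon\|x_0\|$. Everything else is routine verification.
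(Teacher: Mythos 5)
Your proof is correct, and while the overall skeleton matches the paper's (pick $x_0\in D\setminus\Lambda_{p+1}$, manufacture $\widehat f$ close to $f$ with a nonempty bounded slice $S=D\cap\{\widehat f<c\}$ disjoint from $\Lambda_{p+1}$, then invoke Lemma \ref{diam} with $r$ small enough that the triangle inequality gives $\|g-f\|<\varepsilon$), the way you obtain $\widehat f$ is genuinely different and simpler. The paper separates $x_0$ from the convex set $\Lambda_{p+1}$ by Hahn--Banach, then needs two claims (that $f\leqslant\varepsilon\|\cdot\|$ on $\ker h$, hence $f-\lambda h$ has norm at most $\varepsilon$ for a suitable $\lambda>0$) and an interpolation $h_\tau=(1-\tau)\lambda h+\tau f$ to turn $\|f-\lambda h\|\leqslant\varepsilon$ into a strict inequality while keeping the separation. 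You bypass all of this with the observation that \emph{every} functional $h$ with $\|h-f\|<\varepsilon$ automatically satisfies $h\geqslant p+1$ on $\Lambda_{p+1}$ (and defines bounded slices of $D\subset\Lambda_p$ at any level), so the only thing to arrange is $\widehat f(x_0)<p+1$, which a tilt $\widehat f=f-\alpha\psi$ along a norming functional of $x_0$ achieves with $\alpha<\varepsilon$, precisely because $x_0\notin\Lambda_{p+1}$ gives $f(x_0)-(p+1)<\varepsilon\|x_0\|$. Your route uses only the norming-functional form of Hahn--Banach rather than separation of convex sets, and it makes the choice of the level $c=p+1$ canonical; what the paper's more elaborate construction buys is nothing extra for this lemma (both deliver the same data to Lemma \ref{diam}), so your argument is a clean shortcut. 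Minor points you handled correctly and should keep explicit: $r=\varepsilon-\alpha>0$ when calling Lemma \ref{diam}, and the degenerate cases ($f(x_0)<p+1$, where $\alpha=0$ works, and $x_0=0$, which falls into that case).
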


%%%%%%%%%%%%%%%%%%%%%%%%%%%%%%%%%%%%%%%%%%%
%%%%%%%%%%%%%GRAFICA%%%%%%%%%%%%%%%%%%%%%%%%%
%%%%%%%%%%%%%%%%%%%%%%%%%%%%%%%%%%%%%%%%%%%

\begin{center} \begin{picture}(140,130)(-50,-30) 

% %	Ejes coordenados

\put(0,-15){\line(1,0){80}} 
\put(0,-15){\line(-1,0){70}}
%\put(0,0){\vector(0,1){80}} 
%\put(0,0){\line(0,-1){10}} 

\put(-22,-30){{\bf Figure 2}} 
\put(22,35){{$C$}} 
\put(-60,52){{$D$}} 

%\put(-8,70){$s$}
 \put(80,-12){$Ker f$}
%%%%%%%%%%%%%%%%%%
%%%%%%%%%%%%%%%%%%
%Hiperbolas

\bezier{400}(-8,8)(0,2)(8,8) 
\put(8, 8){\line(3,2){70}}
\put(-8, 8){\line(-3,2){58}}

\bezier{500}(-8,40)(0,35)(8,40) 
\put(-8, 40){\line(-3,2){58}}
\put(8, 40){\line(3,2){70}}

%%%%%	Parabola 
\bezier{400}(-60,50)(5,10)(50,38) 
\put(70,51.3){\line(-3,-2){20}}

%%%%%Rellenar
%%%%%%%%%%%%%%%%%%%%%%
\put(3.3, 25){\line(3,1){58}}
\put(8.6, 26.5){\line(3,1){49}}
\put(9.5, 26.6){\line(3,1){47}}
\put(10.4, 26.7){\line(3,1){45}}
\put(11.28, 26.8){\line(3,1){43.2}}
\put(11.7, 26.6){\line(3,1){42.5}}
\put(12.4, 26.5){\line(3,1){41}}
\put(13.3, 26.5){\line(3,1){38.5}}
\put(14.1, 26.6){\line(3,1){37}}
\put(15, 26.5){\line(3,1){35.5}}
\put(17.5, 26.95){\line(3,1){32}}
\put(19.5, 27.3){\line(3,1){28}}
\put(21, 27.5){\line(3,1){25}}
\put(22.5, 27.6){\line(3,1){22}}
\put(25, 28.2){\line(3,1){16.5}}
\put(26.8, 28.4){\line(3,1){12}}
\put(80,90){$\Lambda_{p+1}$}
\put(80,50){$\Lambda_p$}
\end{picture}
\end{center}

%%%%%%%%%%%%%%%%%%%%%%%%%%%%%%%%%%%%%
%%%%%%%%%%%%%%%%%%%%%%%%%%%%%%%%%%%%

\begin{proof}
Let us pick $x_0\in D\backslash\Lambda_{p+1}$. According to the Hahn-Banach Theorem,
there exists $h\in X^*$ such that 
\begin{equation}
h(x_0)<\inf\{h(x);\,x\in\Lambda_{p+1}\}.\label{1}
\end{equation}
Without loss of generality, we can assume that $\|h\|=1$.

\smallskip\noindent
Claim 1 :  $h(x)=0$ implies $f(x)\leqslant\varepsilon\|x\|$.

\noindent
Indeed, if $h(x)=0$, then, for all $\tau>0$, $h(\tau x+x_0)=h(x_0)$, hence,
according to inequality \eqref{1}, 
$f(\tau x+x_0)<\varepsilon\| \tau x+x_0\|+p+1\leqslant \tau\varepsilon\|x\|+\varepsilon\|x_0\|+p+1$.
On the other hand, $x_0\in\Lambda_p$, so $f(x_0)\geqslant\varepsilon\|x_0\|+p$,
and the above inequality implies
$$
f(x)\leqslant\varepsilon\|x\|+\frac 1 \tau
$$ 
The claim is proved since this is true for all $\tau>0$. 

\smallskip\noindent
Claim 2 :  there exists $\lambda>0$ such that $\| f-\lambda h\|\leqslant\varepsilon$.

\noindent
It follows from claim 1 and the Hahn-Banach theorem that there exists $h'\in X^*$ such that 
$\| h'\|=\varepsilon$ and for all $x\in Ker(h)$, $h'(x)=f(x)$.
Therefore, there exists $\lambda\in\R$ such that $f-h'=\lambda h$.
Pick $x\in \Lambda_1\cap\Lambda_{p+1}$. 
This implies that $\tau x\in\Lambda_{p+1}$ for all $\tau>1$. 
If $h(x)<0$, then $h(\tau x)$ tends to $-\infty$
as $\tau$ tends to $+\infty$, which contradicts the fact that $\tau x\in\Lambda_{p+1}$ for $\tau>1$
and the fact that $h$ is bounded below on $\Lambda_{p+1}$. Hence $h(x)\geqslant 0$.
Let us prove that $\lambda>0$. Otherwise, $h'(x)=f(x)-\lambda h(x)>\varepsilon\| x\|$,
which contradicts the fact that $\| h'\|\leqslant\varepsilon$. 

\medskip
For $\tau\in(0,1)$, we denote $h_\tau=(1-\tau)\lambda h+\tau f$. Clearly, $\| h_\tau-f\|<\varepsilon$.
If $\tau$ is small enough, $h_\tau$ also satisfies \eqref{1}.
Indeed, if we denote $m=\inf\{h(x);\,x\in\Lambda_{p+1}\}$, we have $m>h(x_0)$.
Therefore, 
$$
\inf\{h_\tau(x);\,x\in\Lambda_{p+1}\}\geqslant (1-\tau)\lambda m+\tau p>(1-\tau)\lambda h(x_0)+\tau f(x_0)
$$
whenever $\tau$ is small enough.

\medskip
We now fix $\tau$ such that $h_\tau(x_0)<\inf\{h_\tau(x);\,x\in\Lambda_{p+1}\}$,
we denote $\widehat{f}=h_\tau$, and we choose $c$ such that 
$\widehat{f}(x_0)<c<\inf\{\widehat{f}(x);\,x\in\Lambda_{p+1}\}$. 
The open slice $S=D\cap\{\widehat{f}<c\}$ is non empty, does not intersect $\Lambda_{p+1}$,
and it  is bounded, 
because if $x$ belongs to this slice, then 
$\| f-\widehat{f}\|\cdot\| x\|\geqslant(f-\widehat{f})(x)>\varepsilon\|x\|-c$,
thus $\| x\|\leqslant\frac{c}{\varepsilon-\| f-\widehat{f}\|}$. 

\medskip
By Lemma \ref{diam}, there exists $g\in X^*$, $\|g-\widehat{f}\|<\varepsilon-\| f-\widehat{f}\|$ and $d\in\R$ 
such that the non empty slice $C:=D\cap \{g <  d\}$ is contained
in $S$ (hence does not intersect $\Lambda_{p+1}$), and $diam\:( C )< \eta$. 
Clearly, $\| f-g\|\leqslant\| f-\widehat{f}\|+\|\widehat{f}-g\|<\varepsilon$.

\end{proof}

From now on, we fix $p\in\mathbb{Z}$.
The following result gives the existence of a ``slicing'' of $\Lambda_p\backslash\Lambda_{p+1}$
into small pieces.

\begin{lem}\label{diamCalpha}
Let $\eta\!>\!0$.
Then, there exists transfinite sequences $(f_\alpha)\in Y^*$ with $\|f_\alpha-f\|<\varepsilon$, and $(c_\alpha)$ in 
$\R$, such that, if  $(D_{\alpha})_{\alpha \leqslant \mu}$ is the transfinite
decreasing sequence of closed convex sets defined as follows :
\begin{itemize}
\item $D_0 = \Lambda_p$;
\item  for all $\alpha$,\: $D_{\alpha+1} =D_{\alpha}\backslash\{f _{\alpha}<c_{\alpha}\}$
\item $D_\alpha=\bigcap_{\gamma<\alpha}D_\gamma$ for all limit ordinal $\alpha$,
\end{itemize} 
and if, for all $\alpha$, 
$C_\alpha=D_{\alpha} \backslash D_{\alpha + 1}$, then $C_\alpha$ is non empty, 
$diam\:( C_{\alpha} )< \eta$,  $D_{\mu} =\Lambda_{p+1}$,
and $\{C_\alpha;\,\alpha<\mu\}$ is a partition of $\Lambda_p\backslash\Lambda_{p+1}$.
\end{lem}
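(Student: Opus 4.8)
The plan is to build the transfinite sequences $(f_\alpha)$, $(c_\alpha)$ together with the sets $D_\alpha$, $C_\alpha$ simultaneously by transfinite recursion, feeding Lemma~\ref{diamC} into each successor step, and then to argue that the recursion is forced to stop because the $C_\alpha$ would otherwise be too many pairwise disjoint non empty subsets of the \emph{set} $X$. Concretely: put $D_0=\Lambda_p$ and define the $D_\alpha$ by the three clauses of the statement; at a successor stage $\alpha+1$, if $D_\alpha=\Lambda_{p+1}$ the construction halts (and we set $\mu=\alpha$), while otherwise $D_\alpha$ is a closed convex set with $\Lambda_{p+1}\subsetneq D_\alpha\subset\Lambda_p$, so Lemma~\ref{diamC} furnishes $f_\alpha\in X^*$ with $\|f_\alpha-f\|<\varepsilon$ and $c_\alpha\in\R$ such that $C_\alpha:=D_\alpha\cap\{f_\alpha<c_\alpha\}=D_\alpha\setminus D_{\alpha+1}$ is non empty, has diameter $<\eta$, and is disjoint from $\Lambda_{p+1}$.

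The invariant I would maintain by transfinite induction is that $D_\alpha$ is closed convex with $\Lambda_{p+1}\subset D_\alpha\subset\Lambda_p$. It is clear for $D_0$. At a successor, $D_{\alpha+1}=D_\alpha\cap\{f_\alpha\geqslant c_\alpha\}$ is an intersection of closed convex sets, lies in $D_\alpha\subset\Lambda_p$, and contains $\Lambda_{p+1}$ because $\Lambda_{p+1}\subset D_\alpha$ while $\Lambda_{p+1}\cap\{f_\alpha<c_\alpha\}=\Lambda_{p+1}\cap C_\alpha=\emptyset$. At a limit ordinal $\alpha$, $D_\alpha=\bigcap_{\gamma<\alpha}D_\gamma$ is again closed convex and sandwiched between $\Lambda_{p+1}$ and $\Lambda_p$. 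Along the way one records that $(D_\alpha)$ is decreasing and that for $\alpha<\beta$ one has $C_\alpha\subset D_\alpha\setminus D_{\alpha+1}$ whereas $C_\beta\subset D_\beta\subset D_{\alpha+1}$, so the $C_\alpha$ are pairwise disjoint.

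It then remains to see that the recursion stops and that $\{C_\alpha:\alpha<\mu\}$ is a partition of $\Lambda_p\setminus\Lambda_{p+1}$. If the recursion never halted, choosing $x_\alpha\in C_\alpha$ for every ordinal $\alpha$ would give an injection of the class of all ordinals into the set $X$ (injective since the $C_\alpha$ are disjoint), which is absurd; hence there is a least $\mu$ with $D_\mu=\Lambda_{p+1}$, and by monotonicity $D_\gamma=\Lambda_{p+1}$ for $\gamma\geqslant\mu$. Each $C_\alpha$ with $\alpha<\mu$ satisfies $C_\alpha\subset D_\alpha\subset\Lambda_p$ and $C_\alpha\cap\Lambda_{p+1}=\emptyset$, so $C_\alpha\subset\Lambda_p\setminus\Lambda_{p+1}$. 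Conversely, given $x\in\Lambda_p\setminus\Lambda_{p+1}=D_0\setminus D_\mu$, let $\beta$ be the least ordinal with $x\notin D_\beta$; then $\beta>0$, and $\beta$ cannot be a limit (otherwise $x\in D_\gamma$ for all $\gamma<\beta$ forces $x\in D_\beta$), so $\beta=\alpha+1$ and $x\in D_\alpha\setminus D_{\alpha+1}=C_\alpha$; moreover $x\in D_\alpha$ and $x\notin\Lambda_{p+1}$ give $D_\alpha\neq\Lambda_{p+1}$, whence $\alpha<\mu$. Thus the sets $C_\alpha$, $\alpha<\mu$, cover $\Lambda_p\setminus\Lambda_{p+1}$, and being pairwise disjoint they form the required partition.

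The step I expect to be the main obstacle is not any computation but the bookkeeping: verifying that the invariant $\Lambda_{p+1}\subset D_\alpha\subset\Lambda_p$—which is exactly what makes Lemma~\ref{diamC} applicable at every stage—is preserved through limit ordinals, and phrasing the termination of the transfinite process cleanly via disjointness of the $C_\alpha$. The degenerate case $\Lambda_p=\Lambda_{p+1}$ is trivial ($\mu=0$, empty partition of the empty set).
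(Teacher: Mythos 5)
Your proposal is correct and follows essentially the same route as the paper: a transfinite recursion that applies Lemma~\ref{diamC} to $D_\alpha$ at each non-terminal stage and stops as soon as $D_\alpha=\Lambda_{p+1}$. You merely spell out details the paper leaves implicit (the invariant $\Lambda_{p+1}\subset D_\alpha\subset\Lambda_p$ at limit stages, termination via the cardinality argument on the pairwise disjoint non empty $C_\alpha$, and the least-ordinal argument showing the $C_\alpha$ cover $\Lambda_p\backslash\Lambda_{p+1}$), all of which are accurate.
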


\begin{proof}
We prove the existence of $f_\alpha$, $c_\alpha$ by transfinite induction.
Let us assume that $f_\beta$ and $c_\beta$ have been 
constructed for $\beta<\alpha$. Hence, we have constructed
$D_{\alpha}=\Lambda_p\cap\bigl(\bigcap_{\gamma<\alpha}\{ f_{\gamma}\geqslant c_{\gamma}\}\bigr)$. 
If $D_\alpha=\Lambda_{p+1}$, then we set $\mu=\alpha$ and we stop. Otherwise, we apply 
Lemma \ref{diamC} with $D=D_\alpha$   to construct
$g=f_\alpha$ and $d=c_\alpha$ such that, if $C_\alpha =D_{\alpha}\cap \{f_\alpha<c_\alpha\}$, then  
$C_\alpha$ is non empty and has diameter less than $\eta$.
Moreover, since $C_\alpha\subset\Lambda_p$ and $C_\alpha\cap\Lambda_{p+1}=\emptyset$, we have
that the union of the $C_\alpha$ is included in $\Lambda_p\backslash\Lambda_{p+1}$.
The sets $C_\alpha$, $\alpha<\mu$ are pairwise disjoints, and their union is equal to 
$\Lambda_p\backslash\Lambda_{p+1}$ because $D_\mu=\Lambda_{p+1}$, thus
$\{C_\alpha;\,\alpha<\mu\}$ is a partition of $\Lambda_p\backslash\Lambda_{p+1}$.
\end{proof}

\vskip 0.5cm
We now define a mapping $t_0$ on $\Lambda_p\backslash\Lambda_{p+1}$.

\begin{prop}\label{etacauchy}
Let  $\eta>0$. There exists a mapping
$t_0: \Lambda_p\backslash\Lambda_{p+1} \to \Sx \cap B(f,\varepsilon)$ such that 
\begin{itemize}
\item if $x\in\Lambda_p$ and $y\in X$ satisfy $\langle t_0(x), y - x \rangle \leqslant 0$,
then $y\notin\Lambda_{p+1}$,
\item for all sequence $(x_n)$ in 
$\Lambda_p\backslash\Lambda_{p+1}$, 
if   $\langle t_0(x_n), x_{n+1} - x_n \rangle \leqslant 0$ for all $n \in \N$, then $(x_n)$ is $\eta$-Cauchy.
\end{itemize}
\end{prop}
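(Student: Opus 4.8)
The plan is to read the tactic $t_0$ directly off the transfinite slicing produced by Lemma~\ref{diamCalpha}. First I would make sure that the functionals $f_\alpha$ delivered by that lemma may be taken of norm one (this is exactly how the functional $g$ in Lemma~\ref{diam}, hence in Lemma~\ref{diamC}, is obtained), so that applying Lemma~\ref{diamCalpha} with the given $\eta$ yields $f_\alpha\in\Sx$ with $\|f_\alpha-f\|<\varepsilon$, scalars $c_\alpha$, the decreasing family $(D_\alpha)_{\alpha\leqslant\mu}$ with $D_0=\Lambda_p$, $D_{\alpha+1}=D_\alpha\backslash\{f_\alpha<c_\alpha\}$, $D_\mu=\Lambda_{p+1}$, and the pieces $C_\alpha=D_\alpha\backslash D_{\alpha+1}=D_\alpha\cap\{f_\alpha<c_\alpha\}$, which partition $\Lambda_p\backslash\Lambda_{p+1}$ and all have diameter $<\eta$. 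For $x$ in the domain $\Lambda_p\backslash\Lambda_{p+1}$ of $t_0$ I let $\alpha(x)$ be the unique ordinal $<\mu$ with $x\in C_{\alpha(x)}$ and set $t_0(x)=f_{\alpha(x)}$; this clearly maps into $\Sx\cap B(f,\varepsilon)$.

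For the first bullet, write $\alpha=\alpha(x)$. Since $x\in C_\alpha\subset\{f_\alpha<c_\alpha\}$ we have $f_\alpha(x)<c_\alpha$, so any $y$ with $\langle t_0(x),y-x\rangle\leqslant0$ satisfies $f_\alpha(y)\leqslant f_\alpha(x)<c_\alpha$. It then suffices to observe that the open half space $\{f_\alpha<c_\alpha\}$ misses $\Lambda_{p+1}$: the $D_\beta$ decrease and $\alpha<\mu$, so $\Lambda_{p+1}=D_\mu\subset D_\alpha$, whence $\{f_\alpha<c_\alpha\}\cap\Lambda_{p+1}\subset D_\alpha\cap\{f_\alpha<c_\alpha\}=C_\alpha$, and $C_\alpha\cap\Lambda_{p+1}=\emptyset$ because $C_\alpha$ is a block of the partition of $\Lambda_p\backslash\Lambda_{p+1}$. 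Hence $y\notin\Lambda_{p+1}$.

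For the second bullet, the one step that carries the idea is the claim that if $(x_n)$ lies in $\Lambda_p\backslash\Lambda_{p+1}$ and $\langle t_0(x_n),x_{n+1}-x_n\rangle\leqslant0$ for all $n$, then the ordinals $\alpha_n:=\alpha(x_n)$ are non increasing. Suppose instead $\alpha_{n+1}\geqslant\alpha_n+1$ for some $n$. Then $x_{n+1}\in C_{\alpha_{n+1}}\subset D_{\alpha_{n+1}}\subset D_{\alpha_n+1}=D_{\alpha_n}\backslash\{f_{\alpha_n}<c_{\alpha_n}\}$, so $f_{\alpha_n}(x_{n+1})\geqslant c_{\alpha_n}$; on the other hand $x_n\in C_{\alpha_n}$ gives $f_{\alpha_n}(x_n)<c_{\alpha_n}$, and the hypothesis $\langle t_0(x_n),x_{n+1}-x_n\rangle\leqslant0$ gives $f_{\alpha_n}(x_{n+1})\leqslant f_{\alpha_n}(x_n)$, a contradiction. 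A non increasing sequence of ordinals is eventually constant, so there are $N$ and an ordinal $\beta$ with $\alpha_n=\beta$ for all $n\geqslant N$; then $x_n\in C_\beta$ for every $n\geqslant N$, and since $diam(C_\beta)<\eta$ we get $\|x_m-x_n\|<\eta$ for all $m,n\geqslant N$, that is, $(x_n)$ is $\eta$-Cauchy.

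All the genuinely geometric content sits in Lemmas~\ref{radius}, \ref{diam}, \ref{diamC} and \ref{diamCalpha}, so no real obstacle is left here: the single new ingredient is the monotonicity of $n\mapsto\alpha(x_n)$, which --- because the ordinals are well ordered --- forces the sequence to stabilise inside one block $C_\beta$ (of diameter $<\eta$) of the partition, hence to be $\eta$-Cauchy. The only thing that deserves a careful check is, as noted, that the $f_\alpha$ can be chosen in $\Sx$, so that $t_0$ has the stated codomain.
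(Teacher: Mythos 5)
Your proposal is correct and follows essentially the same route as the paper: define $t_0(x)=f_{\alpha(x)}$ from the transfinite slicing of Lemma~\ref{diamCalpha}, show the open slice $\{f_{\alpha}<c_{\alpha}\}$ misses $\Lambda_{p+1}$, and use that the ordinal indices $\alpha(x_n)$ are non increasing, hence eventually constant, so the tail lies in one piece $C_\beta$ of diameter $<\eta$. The only (cosmetic) difference is how the codomain $\Sx\cap B(f,\varepsilon)$ is secured: the paper first builds $t_0$ with values merely in $B(f,\varepsilon)$ and then normalizes, paying a factor $2$ in $\varepsilon$, whereas you trace the norm-one condition back to the statement of Lemma~\ref{diam}; both are legitimate.
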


A mapping $t_0$ with the property of Proposition \ref{etacauchy} will be called 
later on an $\eta$-winning tactic (player $A$ can force the sequence $(x_n)$ to be $\eta$-Cauchy).
\

\begin{proof}
Let us first define $t_0$. 
First observe that if for $0<\varepsilon<1/2$, we have a mapping 
$t_0: \Lambda_p\backslash\Lambda_{p+1} \to B(f,\varepsilon)$, 
then the function defined by $t_1(x)=t_0(x)/\|t_0(x)\|$ has its values in $\Sx\cap B(f,2\varepsilon)$ and 
$\langle t_1(x_n), x_{n+1} - x_n \rangle \leqslant 0$ is equivalent to $\langle t_0(x_n), x_{n+1} - x_n \rangle \leqslant 0$.
So it is enough to construct $t_0: \Lambda_p\backslash\Lambda_{p+1} \to B(f,\varepsilon)$ satisfying the conclusion of Proposition \ref{etacauchy}. Let $f_\alpha$ be the functionals constructed in Lemma \ref{diamCalpha}.
For each $\alpha$, we have $\|f-f_\alpha\| < \varepsilon$.
If $x \in \Lambda_{p} \backslash \Lambda_{p+1}$, 
then there exist $\alpha$ such that $x \in C_{\alpha}$, and we set $t_0(x)=f_\alpha$.

Let us notice that if $x\in C_{\alpha}$, $y\in X$, and 
$\langle t_0(x), y - x \rangle \leqslant 0$, then 
$f_\alpha(y)\leqslant f_\alpha(x)<c_\alpha$
and the above inequality implies $y\notin D_{\alpha+1}$, and in particular $y\notin\Lambda_{p+1}$. 

Let now $(x_n)$ be a sequence in $\Lambda_p\backslash\Lambda_{p+1}$ such that 
$\langle t_0(x_n), x_{n+1} - x_n \rangle \leqslant 0$ for all $n \in \N$.
Let $\alpha_n$ be such that $x_n \in C_{\alpha_{n}}$. Since $t_0(x_n)=f_{\alpha_n}$ and
$\langle t_0(x_n), x_{n+1} - x_n \rangle \leqslant 0$, we obtain $f_{\alpha_n}(x_{n+1}) \leqslant f_{\alpha_n}(x_{n})$.
This implies that $x_{n+1} \notin D_{{\alpha_n}+1}$. But $x_{n+1} \in C_{\alpha_{n+1}}$, so $\alpha_{n+1}\leqslant \alpha_{n}$. 
Thus $(\alpha_{n})$ is a nonincreasing sequence. 
The set $A = \{\alpha_n; n\in \N\} \subset [0, \mu]$ is  well ordered, so there exists $n_0$ such that $\alpha_{n_0} = \min A$. 
Then for all $n \geqslant n_0$,  $\alpha_n = \alpha_0$. 
Now, for all $n,m \geqslant n_0$, we have $x_n, x_m \in C_{\alpha_{n_0}}$, so $\|x_n - x_m\| < \eta$. 
Thus  the sequence $(x_n)$ is $\eta$-Cauchy. 
\end{proof}

\section{Multi-$\varepsilon$-tactics.}

Whenever $E$ is a set, we denote $\mathcal{P}(E)$ the set of subsets of $E$.

\begin{defn}
Let $T: A \subset X \to \mathcal{P}(X^{*})$. We say that $t: A \to X^{*}$ is a selection of $T$ if  $t(x) \in T(x)$
for all $x \in A$.
\end{defn}

\medskip
In Lemma \ref{diamCalpha}, we have constructed $f_\alpha\in X^*$, $c_\alpha\in\R$,
$D_\alpha\subset X$ such that, if 
$C_\alpha=D_\alpha\backslash D_{\alpha+1}=D_\alpha\cap\{f_\alpha< c_\alpha\}$, then 
$\{C_\alpha;\,\alpha<\mu\}$ is a partition  of $\Lambda_p\backslash\Lambda_{p+1}$.
We now define $\displaystyle R(x)= \frac{c_\alpha-f_\alpha(x)}{4 \max\{\|u\|; u \in C_\alpha\}}$
 whenever $x \in C_{\alpha}$.

\begin{prop}\label{etatactic}
Under the notations of Lemma \ref{diamCalpha}, let us define 
$T:\Lambda_p\backslash\Lambda_{p+1}\to\mathcal{P}\bigl(\Sx\cap B(f,\varepsilon)\bigr)$ by 
$T(x)=\Sx\cap B(f,\varepsilon)\cap\overline{B}(f_\alpha,R(x))$ whenever $x \in C_{\alpha}$.
Then, for each selection $t$ of $T$, $t$ is an $\eta$-winning tactic.
\end{prop}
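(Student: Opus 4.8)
The plan is to show that an arbitrary selection $t$ of $T$ behaves, for the purposes of the argument in Proposition \ref{etacauchy}, exactly like the tactic $t_0$ defined there, the only difference being that $t(x)$ may be a small perturbation of $f_\alpha$ rather than $f_\alpha$ itself. The key point is that Lemma \ref{radius} was designed precisely to absorb such perturbations: if $x\in C_\alpha$ and $g\in\Sx$ with $\|g-f_\alpha\|\leqslant R(x)$, then the half-space $\{g\leqslant g(x)\}$ cannot reach into $D_\alpha\setminus C_\alpha$, i.e. cannot reach $D_{\alpha+1}$. First I would fix a selection $t$ of $T$; by definition, for $x\in C_\alpha$ we have $t(x)\in\Sx\cap B(f,\varepsilon)\cap\overline{B}(f_\alpha,R(x))$, so in particular $t$ maps $\Lambda_p\setminus\Lambda_{p+1}$ into $\Sx\cap B(f,\varepsilon)$ and $\|t(x)-f_\alpha\|\leqslant R(x)$.

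Next I would verify the first bullet of Proposition \ref{etacauchy}. Let $x\in C_\alpha$ and $y\in X$ with $\langle t(x),y-x\rangle\leqslant 0$, i.e. $t(x)(y)\leqslant t(x)(x)$, so $y\in\{t(x)\leqslant t(x)(x)\}$. Applying Lemma \ref{radius} with $D=D_\alpha$, $\widehat f=f_\alpha$, $c=c_\alpha$ (so $S=D_\alpha\cap\{f_\alpha<c_\alpha\}=C_\alpha$, which is bounded and nonempty, and $D_\alpha\setminus C_\alpha=D_{\alpha+1}\supseteq\Lambda_{p+1}$ is nonempty), the hypothesis $\|t(x)-f_\alpha\|\leqslant R(x)$ gives $\bigl(D_\alpha\setminus C_\alpha\bigr)\cap\{t(x)\leqslant t(x)(x)\}=\emptyset$. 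Hence $y\notin D_{\alpha+1}$, and in particular $y\notin\Lambda_{p+1}$. (This works whenever $x\in\Lambda_p\setminus\Lambda_{p+1}$; if $x\in\Lambda_{p+1}$ the statement is about $t_0$/its extension, but here $T$, hence $t$, is only defined on $\Lambda_p\setminus\Lambda_{p+1}$, so this is all that is needed.)

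For the second bullet, I would repeat verbatim the counting argument from the proof of Proposition \ref{etacauchy}. Let $(x_n)$ be a sequence in $\Lambda_p\setminus\Lambda_{p+1}$ with $\langle t(x_n),x_{n+1}-x_n\rangle\leqslant 0$ for all $n$, and let $\alpha_n$ be the ordinal with $x_n\in C_{\alpha_n}$. Since $t(x_n)\in\overline{B}(f_{\alpha_n},R(x_n))$ and $x_{n+1}\in\{t(x_n)\leqslant t(x_n)(x_n)\}$, Lemma \ref{radius} (applied as above with $D=D_{\alpha_n}$) gives $x_{n+1}\notin D_{\alpha_n+1}$; since $x_{n+1}\in D_{\alpha_{n+1}}$ this forces $\alpha_{n+1}\leqslant\alpha_n$, so $(\alpha_n)$ is a nonincreasing sequence of ordinals and therefore eventually constant, say $\alpha_n=\alpha_{n_0}$ for $n\geqslant n_0$. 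Then $x_n,x_m\in C_{\alpha_{n_0}}$ for $n,m\geqslant n_0$, and $\operatorname{diam} C_{\alpha_{n_0}}<\eta$ yields $\|x_n-x_m\|<\eta$; thus $(x_n)$ is $\eta$-Cauchy, so $t$ is an $\eta$-winning tactic. The only genuine content here is the application of Lemma \ref{radius}; everything else is bookkeeping identical to the previous proof, so I do not expect a serious obstacle — the one thing to be careful about is checking that the hypotheses of Lemma \ref{radius} (boundedness and nonemptiness of $S=C_\alpha$, nonemptiness of $D_\alpha\setminus C_\alpha$) hold at each step, which they do because $\Lambda_{p+1}\subset D_{\alpha+1}\subsetneq D_\alpha$ throughout the construction of Lemma \ref{diamCalpha}.
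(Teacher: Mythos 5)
Your proposal is correct and follows essentially the same route as the paper: apply Lemma \ref{radius} with $D=D_\alpha$, $\widehat f=f_\alpha$, $c=c_\alpha$ to get $y\notin D_{\alpha+1}$, then run the nonincreasing-ordinal argument from Proposition \ref{etacauchy} to conclude that the sequence is $\eta$-Cauchy. Your explicit verification of the hypotheses of Lemma \ref{radius} (boundedness of $S=C_\alpha$ and nonemptiness of $D_\alpha\setminus C_\alpha=D_{\alpha+1}\supseteq\Lambda_{p+1}$) is a detail the paper leaves implicit, but the argument is the same.
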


\begin{proof}
Let $t$ be a selection of $T$, and let us prove that the selection $t$ is $\eta$ winning.
If $x \in C_{\alpha}$ and $t(x)(y) \leqslant t(x)(x)$ then, according to 
Lemma \ref{radius}, $y \notin D_{\alpha + 1}$, and in particular,
$y\notin\Lambda_{p+1}$. Let now $(x_n)$ be a sequence in 
$\Lambda_{p} \backslash\Lambda_{p+1}$ such that 
$\langle t(x_n), x_{n+1} - x_n \rangle \leqslant 0$ for all $n \in \N$.
Let $\alpha_n$ be such that $x_n \in C_{\alpha_{n}}$. Since 
$t(x_n)(x_{n+1})  \leqslant t(x_{n})(x_n)$ and $x_n \in C_{\alpha_{n}}$, 
we obtain that $x_{n+1}\notin D_{\alpha_n + 1}$.
But $x_{n+1} \in C_{\alpha_{n+1}}$, so $\alpha_{n+1}\leqslant\alpha_n$,
hence $(\alpha_{n})$ is a non increasing sequence of ordinals. 
Therefore the sequence $(\alpha_{n})$
is stationary, and, as in the proof of Proposition \ref{etacauchy},  
all the $x_n$ except finitely many of them are in the same $C_\alpha$ which has diameter less than $\eta$. 
Thus,  $(x_n)$ is $\eta$-Cauchy.\\
\end{proof}

\section{A sequence of multi-$\varepsilon$-tactics.}

\begin{lema}\label{beta} Assume that $X$ has the Radon-Nikodym property.
Let $\eta,r>0$ and $D$ be a closed convex set of $X$. Let $\widehat{f}\in X^*$ and $c\in\R$ be such that $S = D \cap \{\widehat{f} < c\}$ is a non empty bounded set. 
Then, there exists  transfinite sequences $(g_\beta)_{1\leqslant\beta<\mu}$ in $\Sx$ 
and $(d_\beta)_{1\leqslant\beta<\mu}$ 
in $\R$ such that, if  $(D_{\beta})_{0\leqslant\beta \leqslant \mu}$ is defined as follows :
$$
\text{for all }\beta\geqslant 0, \,\, D_\beta=D\cap\bigl(\bigcap_{\gamma<\beta} \{g_{\gamma}\geqslant d_{\gamma}\}\bigr)
$$
Then, for all $\beta<\mu$, $D_\beta\supset D\cap\{\widehat{f}\geqslant c\}$, and, 
 if we denote $C_\beta=D_\beta\backslash D_{\beta+1}$, we have
\begin{itemize}
\item[(i)] $C_{\beta}\ne\emptyset$ and $diam\:( C_{\beta})< \eta$. 
\item[(ii)] $\|g_{\beta} - \widehat{f}\| < \min\big\{r, \inf\{R(u); u \in C_{\beta}\}\big\}$.
\item[(iii)] $\big\{C_\beta;\,\beta<\mu\big\}$ is a partition of $S$.
\end{itemize}
\end{lema}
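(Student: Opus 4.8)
The plan is to build the transfinite sequences $(g_\beta)$ and $(d_\beta)$ by transfinite induction, at each successor step applying Lemma~\ref{diam} to peel off one small slice, exactly in the spirit of the proof of Lemma~\ref{diamCalpha}. Concretely, set $D_0 = D$. Suppose $D_\beta$ has been constructed with $D_\beta \supset D\cap\{\widehat f \geqslant c\}$; if $D_\beta = D\cap\{\widehat f\geqslant c\}$ we stop and put $\mu = \beta$. Otherwise $D_\beta$ is a closed convex set (an intersection of $D$ with closed half-spaces) which still contains the non-empty bounded slice $D_\beta\cap\{\widehat f < c\}$ — note this slice is non-empty precisely because $D_\beta\ne D\cap\{\widehat f\geqslant c\}$, and it is bounded because it is contained in the bounded set $S = D\cap\{\widehat f<c\}$. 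Apply Lemma~\ref{diam} to the closed convex set $D_\beta$ (with the same $\widehat f$, $c$, and the given $\eta$, $r$) to obtain $g_\beta \in \Sx$ and $d_\beta\in\R$ such that $C_\beta := D_\beta\cap\{g_\beta < d_\beta\}$ is non-empty, has diameter less than $\eta$, is contained in $D_\beta\cap\{\widehat f < c\}\subset S$, and satisfies $\|g_\beta - \widehat f\| < \min\{r,\inf\{R(u);u\in C_\beta\}\}$. Set $D_{\beta+1} = D_\beta\cap\{g_\beta\geqslant d_\beta\} = D_\beta\setminus C_\beta$. At limit ordinals $\beta$, put $D_\beta = \bigcap_{\gamma<\beta}D_\gamma$, which is automatically closed and convex and still contains $D\cap\{\widehat f\geqslant c\}$ since every $D_\gamma$ does.

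The first thing to check is that this process terminates, i.e. that $\mu$ is a genuine ordinal and not a proper class. This is the standard cardinality argument: the sets $C_\beta$ are pairwise disjoint non-empty subsets of $X$ (each $C_\beta\subset D_\beta$ and $C_\gamma\subset D_{\gamma+1}\subset X\setminus C_\beta$ for $\gamma>\beta$), so the map $\beta\mapsto C_\beta$ is injective, and the length of the sequence is bounded by the cardinality of $\mathcal P(X)$; hence the induction must halt at some ordinal $\mu$. At that stage $D_\mu = D\cap\{\widehat f\geqslant c\}$ by construction. Items (i) and (ii) of the statement hold for each $\beta<\mu$ directly by the properties furnished by Lemma~\ref{diam} at step $\beta$. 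For the inclusion $D_\beta\supset D\cap\{\widehat f\geqslant c\}$ for all $\beta<\mu$: this is maintained at successor steps because we only ever remove points of $\{\widehat f<c\}$ (as $C_\beta\subset\{\widehat f<c\}$), and at limit steps because an intersection of supersets of $D\cap\{\widehat f\geqslant c\}$ still contains it.

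For (iii), that $\{C_\beta;\,\beta<\mu\}$ partitions $S = D\cap\{\widehat f<c\}$: disjointness was noted above. Each $C_\beta$ lies in $S$ because $C_\beta\subset D_\beta\cap\{\widehat f<c\}\subset D\cap\{\widehat f<c\} = S$ (using $C_\beta\subset\{g_\beta<d_\beta\}\cap D_\beta$ and the containment $C_\beta\subset S$ guaranteed by Lemma~\ref{diam}(i)). Conversely, given $x\in S$, consider the set of $\beta\leqslant\mu$ with $x\in D_\beta$; it is non-empty (contains $0$), downward-closed, and closed under intersections at limits, so it has a largest element $\beta_0$. Since $x\notin D_\mu = D\cap\{\widehat f\geqslant c\}$ (because $\widehat f(x)<c$), we have $\beta_0<\mu$, and then $x\in D_{\beta_0}\setminus D_{\beta_0+1} = C_{\beta_0}$. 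Hence every point of $S$ lies in exactly one $C_\beta$, which is (iii). I expect the only real subtlety to be the well-definedness/termination of the transfinite recursion and the careful bookkeeping at limit ordinals; everything else is a direct transcription of Lemma~\ref{diam} together with the partition argument already used in Lemma~\ref{diamCalpha}.
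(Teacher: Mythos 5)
Your proof is correct and follows essentially the same route as the paper: transfinite induction, applying Lemma~\ref{diam} to $D_\beta$ at each successor step to peel off the small slice $C_\beta$, stopping when $D_\beta=D\cap\{\widehat f\geqslant c\}$. In fact you supply details the paper leaves implicit (the cardinality argument for termination and the verification that every point of $S$ lies in exactly one $C_\beta$), so nothing is missing.
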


\begin{proof}
We shall construct $g_\beta$ and $d_\beta$ by transfinite induction using Lemma \ref{diam} at each step.
Let us assume that $g_\gamma$ and $d_\gamma$ have been 
constructed for $\gamma<\beta$. 
Hence $D_\beta=D\cap\bigl(\bigcap_{\gamma<\beta}\{g_\gamma\geqslant d_\gamma\}\bigr)$ 
is well defined (notice that $D_0=D$). 

If $D_\beta\cap \{\widehat{f}<c\}$ is non empty, it is also bounded because it is included in 
$S=D\cap \{\widehat{f}<c\}$.  Applying Lemma \ref{diam} with $D_\beta$ in place of $D$, we find
$g_\beta$ and $d_\beta$ such that $C_{\beta} = D_{\beta}\cap\{g_{\beta} <  d_{\beta}\}$ 
satisfies conditions $(i)$, $(ii)$
and $C_{\beta}\subset S$. This last condition implies that 
$D_{\beta+1}=D_\beta\backslash C_\beta\supset D\cap\{\widehat{f}\geqslant c\}$.

If $D_\beta=D\cap\{\widehat{f}\geqslant c\}$, then we set $\mu=\beta$ and we stop
and condition $(iii)$ is satisfied.
\end{proof}

We are now ready to  construct a decreasing sequence $(T_k)$ of multi-$\varepsilon$-tactics.

\noindent
\begin{thm}\label{sequence}
Le us fix a sequence $(\eta_k)$ converging to $0$ such that $\eta_k > 0$ for all $k$. There exists a sequence $(T_k)$ of multivalued functions from  $\Lambda_{p} \backslash\Lambda_{p+1}$ to $\Sx$ such that 
$T_k(x) = \Sx \cap \overline{B}(\widehat{f}_{x,k} , r_{k}(x))$, where $\widehat{f}_{x,k}\in\Sx$, $r_{k}(x) > 0$, $r_{k}(x)\to 0$ and 
$T_{k+1}(x) \subset T_{k}(x)$, and with the property that, 
for all $t$ selection of $T_k$,  $t$ is an $\eta_{k}$-winning tactic.
\end{thm}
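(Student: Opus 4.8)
The plan is to construct the sequence $(T_k)$ by transfinite recursion, refining the partition of $\Lambda_p \backslash \Lambda_{p+1}$ at each step. The base case $k=0$ is essentially Lemma~\ref{diamCalpha} together with Proposition~\ref{etatactic}: apply Lemma~\ref{diamCalpha} with $\eta = \eta_0$ to obtain functionals $f_\alpha^{(0)}$, scalars $c_\alpha^{(0)}$, convex sets $D_\alpha^{(0)}$, and a partition $\{C_\alpha^{(0)}\}$ of $\Lambda_p \backslash \Lambda_{p+1}$ into pieces of diameter less than $\eta_0$; then set $T_0(x) = \Sx \cap B(f,\varepsilon) \cap \overline{B}(f_\alpha^{(0)}, R_0(x))$ when $x \in C_\alpha^{(0)}$, where $R_0(x) = (c_\alpha^{(0)} - f_\alpha^{(0)}(x))/(4\max\{\|u\|;u\in C_\alpha^{(0)}\})$. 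By Proposition~\ref{etatactic}, every selection of $T_0$ is an $\eta_0$-winning tactic, and $\widehat{f}_{x,0}$, $r_0(x)$ are the obvious data. The core of the argument is the inductive step.

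For the inductive step I would use Lemma~\ref{beta} to subdivide each current piece. Assume $T_k$ is given, with its partition $\{C_\alpha^{(k)}\}$ of $\Lambda_p\backslash\Lambda_{p+1}$, where $C_\alpha^{(k)} = D_\alpha^{(k)} \cap \{f_\alpha^{(k)} < c_\alpha^{(k)}\}$ and $D_\alpha^{(k)}$ is the running convex set. Fix a piece $C_\alpha^{(k)}$. It is a bounded slice of the closed convex set $D_\alpha^{(k)}$ defined by $f_\alpha^{(k)}$ at level $c_\alpha^{(k)}$. Apply Lemma~\ref{beta} with $D = D_\alpha^{(k)}$, $\widehat{f} = f_\alpha^{(k)}$, $c = c_\alpha^{(k)}$, the refinement parameter $\eta = \eta_{k+1}$, and the radius constraint $r = r_k$ chosen so that the new functionals stay inside $\overline{B}(f_\alpha^{(k)}, r_k(\cdot))$ — concretely $r$ taken below $\inf\{R_k(u); u \in C_\alpha^{(k)}\}$ minus a margin, using condition (ii) of Lemma~\ref{beta} which controls $\|g_\beta - \widehat{f}\|$ both by $r$ and by $\inf\{R(u); u \in C_\beta\}$. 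This produces a transfinite partition $\{C_{\alpha,\beta}^{(k+1)}\}_{\beta < \mu_\alpha}$ of $C_\alpha^{(k)}$ into pieces of diameter less than $\eta_{k+1}$, with associated functionals $g_{\alpha,\beta}$ and scalars $d_{\alpha,\beta}$ and convex sets $D_{\alpha,\beta}^{(k+1)}$ (the $D_\beta$ of Lemma~\ref{beta}, which all contain $D_\alpha^{(k)} \cap \{f_\alpha^{(k)} \geqslant c_\alpha^{(k)}\} = D_{\alpha+1}^{(k)}$). Relabelling the doubly-indexed pieces by a single ordinal gives the refined data $f_\cdot^{(k+1)}$, $c_\cdot^{(k+1)}$, $D_\cdot^{(k+1)}$, and the new radius function $R_{k+1}$; set $T_{k+1}(x) = \Sx \cap B(f,\varepsilon) \cap \overline{B}(f_\gamma^{(k+1)}, R_{k+1}(x))$ for $x$ in the new piece $C_\gamma^{(k+1)}$.

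Three things then need to be checked. First, $T_{k+1}(x) \subset T_k(x)$: this is exactly where the radius bound $\|g_{\alpha,\beta} - f_\alpha^{(k)}\| < \inf\{R_k(u); u \in C_\alpha^{(k)}\}$ from Lemma~\ref{beta}(ii) is used — one shows $\overline{B}(f_\gamma^{(k+1)}, R_{k+1}(x)) \subset \overline{B}(f_\alpha^{(k)}, R_k(x))$ for $x \in C_\gamma^{(k+1)} \subset C_\alpha^{(k)}$ by the triangle inequality, after additionally arranging $R_{k+1} \leqslant R_k$ on the relevant piece (shrink if necessary, since diameters, not radii, are what Lemma~\ref{beta} forces, and $R_{k+1}$ can only be taken smaller without harm). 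Second, $r_{k+1}(x) = R_{k+1}(x) \to 0$: here I would note that along any chain of nested pieces the defining slices shrink (diameter $< \eta_{k+1} \to 0$), and since $R_{k+1}(x) \leqslant \operatorname{diam}(C_\gamma^{(k+1)})/(4 \cdot \text{something bounded below})$ — more carefully, $R_{k+1}(x) = (c_\gamma - f_\gamma(x))/(4M_\gamma)$ with $c_\gamma - f_\gamma(x) \leqslant \|f_\gamma\| \operatorname{diam}(C_\gamma^{(k+1)}) \leqslant \eta_{k+1}$ if $x \in C_\gamma^{(k+1)}$ and one picks $M_\gamma$ bounded below (it is, since $\Lambda_p\backslash\Lambda_{p+1}$ is bounded away from a neighborhood issue — actually $M_\gamma \geqslant $ distance considerations; if not, a further shrink of $R_{k+1}$ by hand forces $r_{k+1} \leqslant \eta_{k+1}$ directly, which suffices). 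Third, every selection of $T_{k+1}$ is an $\eta_{k+1}$-winning tactic: this is immediate from Proposition~\ref{etatactic} applied to the refined partition, since $T_{k+1}$ has exactly the form $\Sx \cap B(f,\varepsilon) \cap \overline{B}(f_\gamma^{(k+1)}, R_{k+1}(x))$ required there, and the functionals satisfy $\|f_\gamma^{(k+1)} - f\| < \varepsilon$ because $\|g_{\alpha,\beta} - f_\alpha^{(k)}\| < r_k \leqslant R_0 \leqslant$ a margin keeping the total distance to $f$ below $\varepsilon$ (track the geometric-type sum of radii across levels and choose the initial $r$ and each $\eta_k$ small enough that $\|f_\gamma^{(k+1)} - f\| \leqslant \|f_\gamma^{(k+1)} - f_\alpha^{(k)}\| + \|f_\alpha^{(k)} - f\| < \varepsilon$). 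The main obstacle is bookkeeping: keeping the nested-ball inclusions $T_{k+1} \subset T_k$ compatible with the separate requirement that all $f_\gamma^{(k+1)}$ remain in $B(f,\varepsilon)$ and that $r_k(x) \to 0$, all while the index ordinals $\mu_k$ grow at each refinement; the clean way to manage this is to demand from the outset that $R_k \leqslant R_{k-1}$ and $r_k \leqslant \min\{\eta_k, R_{k-1}/2\}$ along nested pieces, so that the inclusions and the convergence both fall out of the triangle inequality and a single geometric bound.
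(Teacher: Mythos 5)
Your proposal is correct and follows essentially the same route as the paper: induction on $k$, base case from Proposition~\ref{etatactic}, refinement of each piece $C_{\alpha,k}$ via Lemma~\ref{beta} with the radius parameter tied to $r_k$ and $\inf\{R_k(u);u\in C_{\alpha,\beta}\}$, lexicographic re-indexing, nesting $T_{k+1}(x)\subset T_k(x)$ by the triangle inequality, and the winning property by the same ordinal-monotonicity argument based on Lemma~\ref{radius} (which the paper re-proves rather than quoting Proposition~\ref{etatactic}, but the mechanism is identical). Your explicit device of forcing $r_{k+1}\leqslant\eta_{k+1}$ by an extra shrink is harmless and in fact addresses the convergence $r_k(x)\to 0$ more directly than the paper's own bookkeeping.
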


\begin{proof}
The construction will be carried out by induction on $k$.
\\ \\
\bf Construction of $T_0$. 

\noindent\rm It is enough to apply Proposition \ref{etatactic} with $\eta=\eta_0$.
\\ \\
\bf Induction step. \rm

\noindent
Assume $T_{k}(x) =  \Sx \cap \overline{B}(\widehat{f}_{x,k}, r_{k}(x))$ has been constructed with the following properties~: 
\begin{itemize}
\item[-] There exists a partition of $\Lambda_p \backslash \Lambda_{p+1}$, 
 given by $C_{\alpha, k} = D_{\alpha, k} \cap \{f_{\alpha, k} < c_{\alpha,k}\}$ with $\alpha<\mu_k$,
 such that $diam (C_{\alpha, k}) < \eta_{k}$
and $\widehat{f}_{x,k} = f_{\alpha, k}$ whenever $x\in C_{\alpha, k}$. 
\item[-]  If $x\in C_{\alpha,k}$, $r_{k}(x) = \min\{R_{k}(x), r_{k}\}$, 
 where $r_k>0$ is constant on $C_{\alpha, k}$ and 
 $\displaystyle R_k(x)= \frac{c_{\alpha,k}-f_{\alpha,k}(x)}{4 \max\{\|u\|; u \in C_{\alpha,k}\}}$.
 \item[-]  For all $t$ selection of $T_k$,  $t$ is an $\eta_{k}$-winning tactic.
 \end{itemize} 
 
 \noindent
 Since $\big\{C_{\alpha,k};\,\alpha<\mu_k\big\}$ is a partition of $\Lambda_p \backslash \Lambda_{p+1}$,
 it is enough, for each $\alpha<\mu_k$, to define $T_{k+1}$ on $C_{\alpha,k}$.
Using Lemma \ref{beta} with $D=D_{\alpha,k}$, 
$\widehat{f}=f_{\alpha,k}$ and $c=c_{\alpha,k}$, there exists  
$g_{\alpha,\beta}\in\Sx$ and $d_{\alpha,\beta}\in\R$ for $\beta<\mu_{\alpha,k}$,
such that 
$\|g_{\alpha,\beta}-\widehat{f}_{x,k}\|<r_{k}(x)$, and, if 
$$
D_{\alpha,\beta}=D_{\alpha,k}\cap\bigl(\bigcap_{\beta<\mu_{\alpha,k}} \{g_{\alpha,\beta}\geqslant d_{\alpha,\beta}\}\bigr),
$$ 
then $D_{\alpha,\beta+1}\supset D_{\alpha+1,k}$, $C_{\alpha,\beta}=D_{\alpha,\beta}\backslash D_{\alpha,\beta+1}$ is non empty,
have diameter less than $\eta_{k+1}$ and $\big\{C_{\alpha,\beta};\,\beta<\mu_{\alpha,k}\big\}$  
is a partition of $S=D_{\alpha,k}\cap\{f_{\alpha, k} < c_{\alpha,k}\}=C_{\alpha, k}$. 
For each $x \in C_{\alpha,\beta}$, we denote 
$$
\widehat{f}_{x,k+1}=g_{\alpha,\beta}\quad\text{and}\quad 
r_{k+1}=\min\big\{r_{k}, \inf\{R_k(u);\,u\in C_{\alpha,\beta}\}\big\} - \|g_{\alpha,\beta} - f_{\alpha, k}\|>0.
$$
$R_{k+1}(x)$ is then defined by 
$\displaystyle R_{k+1}(x)= \frac{d_{\alpha,\beta}-g_{\alpha,\beta}(x)}{4 \max\{\|u\|; u \in C_{\alpha,\beta}\}}$.
Therefore, we have defined
$r_{k+1}(x)=\min\big\{R_{k+1}(x),r_{k+1}\big\}$
and $T_{k+1}(x)=\Sx\cap \overline{B}(\widehat{f}_{x,k+1}, r_{k+1}(x))$.
We claim that $T_{k+1}(x)\subset T_{k}(x)$. Indeed, for $x \in C_{\alpha,\beta}$,
$$
T_{k+1}(x)\subset 
\overline{B}(g_{\alpha,\beta},r_{k+1})\subset
\overline{B}(f_{\alpha,k},\|f_{\alpha,k}-g_{\alpha,\beta}\|+r_{k+1})
\subset\overline{B}(\widehat{f}_{x,k}, r_{k}),
$$
and, on the other hand,
$$
T_{k+1}(x)\subset\overline{B}(g_{\alpha,\beta},r_{k+1})
\subset \overline{B}(g_{\alpha,\beta},R_{k}(x) - \|g_{\alpha,\beta}- f_{\alpha, k}\|)
\subset \overline{B}(\widehat{f}_{x,k},R_k(x)).
$$
If $x\in C_{\alpha,\beta}$ and $g\in T_{k+1}(x)$,
since $\|g-g_{\alpha,\beta}\|\leqslant R_{k+1}(x)$, we can apply Lemma \ref{radius}
with $D=D_{\alpha,\beta}$, $\widehat{f}=g_{\alpha,\beta}$ and $c=d_{\alpha,\beta}$ to obtain
$\{g\leqslant g(x)\}\cap D_{\alpha,\beta+1}=\emptyset$, and since
$D_{\alpha,\beta+1}\supset D_{\alpha+1,k}$, we also have
$\{g\leqslant g(x)\}\cap D_{\alpha+1,k}=\emptyset$. 
Thus,  if $y\in X$ and $g(y)\leqslant g(x)$ then $y\notin \Lambda_{p+1}\subset D_{\alpha+1,k}$.
Also, if $y\in\Lambda_p$ and $g(y)\leqslant g(x)$,  then either 
$y \in C_{\alpha,\beta'}$ with $\beta'\leqslant\beta$
or $y\in C_{\alpha'}$ for some $\alpha'\leqslant\alpha$.

\medskip
The set $E=\big\{(\alpha,\beta);\,\alpha<\mu_k,\,\beta<\mu_{\alpha,k}\big\}$
is well ordered by the relation $(\alpha,\beta)\le(\alpha',\beta')$
if and only if  either  $\alpha=\alpha'$ and $\beta\leqslant\beta'$, or $\alpha\leqslant\alpha'$.
So there exists a unique ordinal $\mu_{k+1}$ and an order preserving bijection 
from $\pi:[0,\mu_{k+1})$ onto $E$. We then define, for $\alpha<\mu_{k+1}$, 
$C_{\alpha, k+1}=C_{\pi(\alpha)}$, $f_{\alpha,k+1}=g_{\pi(\alpha)}$
and $c_{\alpha,k+1}=d_{\pi(\alpha)}$.
Therefore $\big\{C_{\alpha, k+1};\,\alpha \leq \mu_{k+1}\big\}$ is 
a partition of $\Lambda_p \backslash \Lambda_{p+1}$
into sets of diameter less than $\eta_{k+1}$. Moreover, if 
$x\in C_{\alpha,k+1}$ and $g\in T_{k+1}(x)$, then for all $y\in\Lambda_p\backslash\Lambda_{p+1}\cap\{g\leqslant g(x)\}$,
there exists $\alpha'\leqslant\alpha$ such that $y\in C_{\alpha',k+1}$. 

\medskip
Let us now prove that, if $t$ be a selection of $T_{k+1}$, then $t$ is $\eta_{k+1}$-winning. 
If $x \in C_{\alpha,k+1}$ and $y\in X$ satisfy
$t(x)(y) \leqslant t(x)(x)$, then $y\notin\Lambda_{p+1}$, and in the case $y\in\Lambda_p$, then 
$y\in C_{\alpha',k+1}$ for some $\alpha'\leqslant\alpha$.
Let now $(x_n)$ be a sequence in $\Lambda_{p} \backslash\Lambda_{p+1}$ such that 
$\langle t(x_n), x_{n+1} - x_n \rangle \leqslant 0$ for all $n \in \N$.
Let $\alpha_n$ be such that $x_n \in C_{\alpha_{n},k+1}$. Since 
$t(x_n)(x_{n+1})  \leqslant t(x_{n})(x_n)$, 
we obtain  that 
$\alpha_{n+1}\leqslant\alpha_n$. Thus $(\alpha_{n})$ is a non increasing sequence of ordinals, 
hence there exists $n_0$ such that,
for all $n\geqslant n_0$, $\alpha_n=\alpha_{n_0}$, 
All the $x_n$, except finitely many of them, are in  
$C_{\alpha_{n_0},k+1}$ which has diameter less than $\eta_{k+1}$. 
This proves that the sequence $(x_n)$ is $\eta_{k+1}$-Cauchy.
This completes the induction.

\end{proof}

\section{Proof of Theorem \ref{converges}}

\begin{proof}
For each  $p\in\mathbb{Z}$, we define $t(x)$ whenever $x \in \Lambda_{p} \backslash \Lambda_{p+1}$. In this case, $\bigl(T_k(x)\bigr)$ is a decreasing sequence of closed sets
in the Banach space $X^*$ and $diam\bigl(T_k(x)\bigr)\to 0$. Therefore $\bigcap T_k(x)$ is a singleton, and we denote $t(x)$ the unique element
of this intersection. Whenever $x\in\Lambda_p$, we have $t(x)\in T_1(x)$, so 
$$
x\in\Lambda_p\quad\text{and}\quad\langle t(x), y - x \rangle \geqslant 0\quad\Rightarrow\quad y \notin \Lambda_{p+1}
$$
Let us prove that $t$ is a winning tactic in $\Lambda_{p} \backslash\Lambda_{p+1}$. Let us fix a sequence $(x_n)\in \Lambda_{p} \backslash \Lambda_{p+1}$
such that for each $n$, $\langle t(x_n), x_{n+1} - x_n \rangle \leqslant 0$.
Since $t(x)\in T_k(x)$, the sequence is $\eta_k$-Cauchy. Since this is true for all $k\in\N$, the sequence $(x_n)$ converges.
Now let $(x_n)$ be a sequence such that 
the sequence $\bigl(f(x_n) - \varepsilon \|x_n\|\bigr)$ is bounded below and  $\langle t(x_n), x_{n+1} - x_n \rangle \leqslant 0$ for all $n \in \N$.
For each $n$, there exists an integer $p_n\in\mathbb{Z}$ such that
$x_n \in \Lambda_{p_n}\backslash\Lambda_{p_{n + 1}}$. 
Since $\langle t(x_n), x_{n+1} - x_n \rangle \leqslant 0$, $x_{n+1}\notin\Lambda_{p_n+1}$, so $p_{n+1}\leqslant p_n$. 
Since $\bigl(f(x_n) - \varepsilon \|x_n\|\bigr)$ is bounded below, the sequence $(p_n)$ is bounded below.
Thus $(p_n)$ is a nonincreasing sequence which is bounded below, therefore there exists $n_1$ such that 
$p_n = p_{n_1} : = p$ for all $n\leqslant n_1$. So, the whole sequence 
$(x_n)_{n\geqslant n_1}$ is included in $\Lambda_p\backslash\Lambda_{p + 1}$. 
Since $t |_{\Lambda_{p} \backslash \Lambda_{p + 1}}$ is a winning tactic in  $\Lambda_{p}\backslash \Lambda_{p + 1}$
and $\langle t(x_n), x_{n+1} - x_n \rangle \leqslant 0$,
the sequence $(x_n)$ is convergent.

\end{proof}

%%%%%%%%%%%%%%%%%%%%%%
%%%%%%%%%%%%%%%%%%%%%%

\end{document}